\documentclass[hidelinks,onefignum,onetabnum]{siamart220329}



\usepackage{lipsum}
\usepackage{amsfonts}
\usepackage{graphicx}
\usepackage{epstopdf}
\usepackage{algorithmic}
\ifpdf
  \DeclareGraphicsExtensions{.eps,.pdf,.png,.jpg}
\else
  \DeclareGraphicsExtensions{.eps}
\fi


\newsiamremark{remark}{Remark}
\newsiamremark{hypothesis}{Hypothesis}
\crefname{hypothesis}{Hypothesis}{Hypotheses}
\newsiamthm{claim}{Claim}

\headers{Constrained HK barycenters}{M.~Buze}

\title{Constrained Hellinger--Kantorovich barycenters: least-cost soft and conic multi-marginal formulations\thanks{
\funding{This work was supported by research grant EP/V00204X/1.}}}

\author{Maciej Buze\thanks{School of Mathematical Sciences, Lancaster University, Lancaster, LA1 4YW, UK
(\email{m.buze@lancaster.ac.uk}, \url{https://mbuze.github.io}).}
}

\usepackage{amsopn}


\ifpdf
\hypersetup{
  pdftitle={Constrained Hellinger-Kantorovich barycenters: least-cost soft and conic multi-marginal formulations},
  pdfauthor={M. Buze}
}
\fi




\usepackage{enumitem}
\usepackage{makecell}
\usepackage{graphicx}
\usepackage[labelformat=simple]{subcaption}

\captionsetup[subfigure]{labelfont=rm}
\usepackage[english]{babel}
\usepackage[T1]{fontenc}

\newsiamremark{example}{Example}


\newcommand{\cb}{}


\def\F{\mathcal{F}}

\def\H{\mathcal{H}}
\def\E{\mathcal{E}}
\def\Rc{\mathcal{R}}
\def\R{\mathbb{R}}

\def\d{\mathsf{d}}
\def\h{\mathsf{h}}

\def\M{\mathcal{M}}

\begin{document}

\maketitle

\begin{abstract}
  We show that the problem of finding the barycenter in the Hellinger–-Kantorovich setting admits a least-cost soft multi-marginal formulation, provided that a one-sided hard marginal constraint is introduced. The constrained approach is then shown to admit a conic multi-marginal reformulation based on defining a single joint \emph{multi-marginal perspective cost function} in the conic multi-marginal formulation, as opposed to separate two-marginal perspective cost functions for each two-marginal problem in the coupled-two-marginal formulation, as was studied previously in literature. We further establish that, as in the Wasserstein metric, the recently introduced framework of unbalanced multi-marginal optimal transport can be reformulated using the notion of the least cost. Subsequently, we discuss an example when input measures are Dirac masses and numerically solve an example for Gaussian measures. Finally, we also explore why the constrained approach can be seen as a natural extension of a Wasserstein space barycenter to the unbalanced setting.
\end{abstract}

\begin{keywords}
  optimal transport, unbalanced optimal transport, barycenters, multi-marginal optimal transport
\end{keywords}

\begin{MSCcodes}
  49Q22, 49Q20
\end{MSCcodes}

\section{Introduction}
The theory of optimal transportation, dating back to Gaspard Monge's work in 1781 \cite{monge1781memoire}, continues to develop at pace as one of the fundamental mathematical theories with an ever-growing list of diverse applications in fields such as economics, computer vision, image processing and machine learning -- see monographs in \cite{villani2009optimal,S15,PC19} for a broad overview.

A central challenge in many applications concerns finding a representative, or \emph{barycentric} (probability) distribution, which provides some average description of a given set of distributions. The basic optimal transport approach to this problem is to find the barycenter by minimizing the sum of weighted two-marginal optimal transport costs between the barycenter and each input distributions. It was subsequently shown that an equivalent and computationally favourable approach is to instead solve a single \emph{least-cost multi-marginal} optimal transport problem \cite{CE10,AC11}. 

If the input distributions do not all have equal mass, an unbalanced barycenter can be found via a recourse to the emerging theory of unbalanced optimal transportation \cite{CPSV16,KMV16,LMS18,liero2023fine}. This however, can be done in a number of ways, depending on how one penalises mass deviations, what cost function is employed and whether one wishes to consider the conic formulation. Recent important contributions in this area concern:
\setlist[enumerate]{wide=0pt, widest=99, labelwidth=60pt, leftmargin=70pt, align=left}
\begin{enumerate}
    \item[\textbf{(HKB):}] \noindent the study of coupled-two-marginal Hellinger-Kantorovich barycenter and its equivalence to a conic least-cost multi-marginal formulation, as well as the non-existence of an equivalent soft multi-marginal formulation \cite{FMS21,CP21,BMS23}; \label{framework1}
    \item[\textbf{(}${\rm \textbf{UMOT}}_{\varepsilon}$\textbf{):}] \noindent the study of entropy-regularised unbalanced multi-marginal optimal transport and its equivalence to the entropy regularised coupled-two-marginal formulation where the second marginal is fixed  \cite{BvLNS23} -- notably, this is done in an extended space setting, as opposed to the least-cost setting known from the balanced case. \label{framework2}
\end{enumerate}

In particular, apart from the explicitly different starting coupled-two-marginal formulation for \hyperref[framework1]{\textbf{(HKB)}} and \hyperref[framework2]{\textbf{(}${\rm \textbf{UMOT}}_{\varepsilon}$\textbf{)}} (putting aside the question of entropic regularisation), the relation between the two frameworks, especially with regards to their multi-marginal reformulations, has not been thoroughly explored. At the same time, multi-marginal approaches to (unbalanced) optimal transport continue to attract considerable attention both from a theoretical point of view and for applications, c.f., among many others, \cite{pass2015multi, haasler2021multimarginal, ba2022accelerating, friesecke2023gencol,altschuler2023polynomial, trillos2023multimarginal,pass2024general}, so it is vital to study the mathematical underpinning of its unbalanced variant.

\subsection{Outline of the paper and its contributions}

In this paper we introduce and discuss the notion of a \emph{constrained Hellinger-Kantorovich} barycenter, which, in its basic form and similarly to \hyperref[framework2]{\textbf{(}${\rm \textbf{UMOT}}_{\varepsilon}$\textbf{)}}, concerns a coupled-two-marginal formulation of the unbalanced barycenter problem where the cost function and the first marginal entropy functional are chosen as in the Hellinger-Kantorovich (HK) metric, but the second marginal entropy functional is replaced with a hard constraint --- the marginal has to match the barycenter exactly. 

Concretely, we seek to find a barycenter $\overline \nu \in \M(X)$ (a set of finite positive Radon measures on $X$, which for simplicity we take to be a compact subset of $\R^d$) of a set of $N$ measures 
\[
    \vec \mu = (\mu_1, \dots, \mu_N),\quad  \mu_i \in \M(X)
\]
with weights $\vec \lambda = (\lambda_1,\dots, \lambda_N), \lambda_i \geq 0$, adding to one, by solving {\cb a constrained coupled two-marginal problem (CC2M, see Table~\ref{tab:subscript} for more details on the subscript notation being used)}
\[
{\rm HK}_{\rm CC2M}(\vec \mu) = \inf_{\nu \in \M(X)}\left\{\sum_{i=1}^N \lambda_i \overline{\rm HK}^2(\mu_i,\nu)\right\}, 
\]
where
\[
    \overline{\rm HK}^2(\mu_i,\nu) = \inf_{\gamma \in \M(X \times X)}\Big\{ \int_{X \times X}c(x_1,x_2)d\gamma(x_1,x_2)\, + \F(\gamma_1 \mid \mu_i)\, + \overline \F(\gamma_2 \mid \nu) \Big\}.
\]
Here $\gamma_i := (\pi^i)_{\#}\gamma$ is the $i$th marginal, the cost function $c$ and entropy functional $\F$ are as is the HK metric (see Section~\ref{sec:HK} for details), but $\overline \F(\gamma_2 \mid \nu)$ vanishes if $\gamma_2 = \nu$ and is equal to $+\infty$ otherwise. 

Complimentary to \hyperref[framework2]{\textbf{(}${\rm \textbf{UMOT}}_{\varepsilon}$\textbf{)}}, we consider the unregularised problem and furthermore show that a natural \emph{least-cost} soft multi-marginal {\cb (SMM)} reformulation exists and is given by
\[
    {\rm HK}_{\rm SMM}(\vec \mu) := \inf_{\gamma \in \M(X^N)} \left\{\int_{X^N}\tilde c(\vec x)d\gamma(\vec x) + \sum_{i} \lambda_i \F(\gamma_i \mid \mu_i)\right\},
\]
where $\vec x := (x_1,\dots,x_N)$ and the least cost $\tilde c$ is given by 
\begin{equation}\label{tildec-intro}
    \tilde c(\vec x) = \inf_{x \in X} \sum_{i=1}^N \lambda_i c(x_i,x).
\end{equation}

To deepen the connection with \hyperref[framework1]{\textbf{(HKB)}}, we in fact begin the analysis by defining and studying a new conic multi-marginal problem {\cb (CMM)}, given by 
\[
    \overline{\rm HK}_{\rm CMM}(\vec \mu) = \inf_{\alpha \in \M(Y^N)} \left\{\int_{Y^N}\overline H_{\rm CMM}(\vec x, \vec s) \alpha(\vec x, \vec s)\,\mid\, (\pi^{x_i})_{\#}(s_i \alpha) = \mu_i \right\},
\] 
where $Y := X \times\, \R_+$ is the cone space (up to an equivalence relation for all \linebreak ${y = (x,s) \in Y}$ with $s=0$), and $\vec{s} = (s_1,\dots,s_{N}) \in \R_+^N$ are the mass variables. The new \emph{multi-marginal perspective cost function} $\overline H_{\rm CMM}\,\colon\, Y^N \to \overline \R$ is given by 
\[
    \overline H_{\rm CMM}(\vec x, \vec s) = \inf_{(x,s) \in Y} \inf_{t > 0}\sum_{i=1}^N \Big\{t\lambda_i c(x_i,x) + \lambda_i s_i F\left(\tfrac{t}{s_i}\right) + \lambda_i s  F\left(\tfrac{t}{s}\right)\Big\},
\]
where $F$ is the entropy function in the definition of the functional $\F$, (see Section~\ref{sec:HK}) and, as will be discussed in Section~\ref{sec:CMM}, is fundamentally different from the two-marginal perspective cost function approach from \hyperref[framework1]{\textbf{(HKB)}}.

The central result of the paper is the establishing of a three-way equality 
\[
    {\rm HK}_{\rm CC2M}(\vec \mu) = {\rm HK}_{\rm SMM}(\vec \mu) = \overline{\rm HK}_{\rm CMM}(\vec \mu).
\]
While for simplicity we only consider $X$ to be a compact subset of $\R^d$ and, to connect with \hyperref[framework1]{\textbf{(HKB)}}, we only focus on the HK setting, it is expected that the same equality should hold for the class of cost functions and entropy functionals covered by the theory of optimal entropy-transport problems developed in \cite[Part I]{LMS18}, provided that in \eqref{tildec-intro} a unique minimizer exists or the infimum is given by $+\infty$. 

The paper is organised as follows. 
In Section~\ref{sec:prem} we discuss the preliminaries, which includes a reminder on the Hellinger-Kantorovich distance and its conic formulation, followed by a brief account of the theory of (unconstrained) HK barycenters \hyperref[framework1]{\textbf{(HKB)}}.

The main results of the paper are presented in Section~\ref{sec:main-results}. We begin with analysis of the new conic multi-marginal problem $\overline{\rm HK}_{\rm CMM}(\vec \mu)$ in Section~\ref{sec:CMM}, followed by showing in Section~\ref{sec:SMM} that it is equivalent to the soft multi-marginal formulation ${\rm HK}_{\rm SMM}(\vec \mu)$, for which existence of solutions is established (and hence also for the CMM formulation). Finally, in Section~\ref{sec:CC2M} we discuss the constrained coupled-two-marginal problem ${\rm HK}_{\rm CC2M}(\vec \mu)$, show that it admits a solution and establish its equivalence with the soft multi-marginal formulation. 

Section~\ref{sec:disc} is devoted to a discussion about the obtained results. In Section~\ref{sec:comp-UMOT} we discuss the relation of our results to \hyperref[framework2]{\textbf{(}${\rm \textbf{UMOT}}_{\varepsilon}$\textbf{)}}. This includes a numerical example in which we reproduce an example from \cite{BvLNS23} using our least-cost approach. This is followed by a discussion about different ways of characterising the difference between the constrained approach and \hyperref[framework1]{\textbf{(HKB)}} and includes an example about Dirac masses. Finally, in Section~\ref{sec:HKvsW-bary}, we formally discuss the general idea of trying to extend the notion of a Wasserstein barycenter to the unbalanced setting and argue that the constrained approach might be a more natural one. 

The paper finishes with Section~\ref{sec:proofs}, where proofs are gathered. 

{\cb \subsection*{Notation}\label{sec:notation}
The theory developed in this paper necessitates a heavy use of the subscript notation to concisely distinguish between different barycenter and multi-marginal problems being considered. \cref{tab:subscript} will hopefully help readers navigate this. 

\begin{table}[htbp]
  \footnotesize
  \caption{\cb The subscript notation used throughout the paper.}\label{tab:subscript}
  \begin{center} {\cb
    \begin{tabular}{|c|c|c|} \hline
     Notation & Full name  & Defined in  \\ \hline 
      ${\rm HK}_{\rm C2M}(\vec \mu)$ & \makecell{The \textbf{coupled two-marginal (C2M)} \\ Hellinger-Kantorovich barycenter problem} & \eqref{eqn-def-HK-C2M} \\ \hline
      ${\rm HK}_{\rm CMM}(\vec \mu)$ & \makecell{The \textbf{conic multi-marginal (CMM)} formulation  of \\ the Hellinger-Kantorovich barycenter problem} & \eqref{eqn-def-hk-tilde} \\ \hline 
      ${\rm HK}_{\rm CC2M}(\vec \mu)$ & \makecell{The \textbf{constrained coupled two-marginal (CC2M)}\\  Hellinger-Kantorovich barycenter problem} & \eqref{eqn-def-HK-CC2M} \\ \hline
      ${\rm HK}_{\rm SMM}(\vec \mu)$ & \makecell{The \textbf{soft multi-marginal (SMM)} formulation of \\ the constrained Hellinger-Kantorovich barycenter problem} & \eqref{eqn-def-HK-SMM} \\ \hline
      $\overline{\rm HK}_{\rm CMM}(\vec \mu)$ & \makecell{The \textbf{conic multi-marginal (CMM)} formulation of \\ the constrained Hellinger-Kantorovich barycenter problem} & \eqref{eqn-def-hk-mm} \\ \hline
    \end{tabular} }
  \end{center}
  \end{table}
}

\section{Preliminaries}\label{sec:prem}

\subsection{The Hellinger-Kantorovich distance}\label{sec:HK}
The Hellinger-Kantorovich (HK) distance between two finite positive Radon measures ${\mu_1,\mu_2 \in \M(X)}$ on a Hausdorff topological space $X$ {\cb endowed with an extended distance function \linebreak ${\d\,\colon\,X \times X \to [0,+\infty]}$}, introduced independently by three different groups \cite{CPSV16,KMV16,LMS18}, is given by 
\begin{equation}\label{eqn-def-HK}
{\rm HK}^2(\mu_1,\mu_2) = \inf_{\gamma \in \M(X \times X)}\Big\{ (c,\gamma) + \sum_{i=1,2} \F(\gamma_i \mid \mu_i) \Big\}, 
\end{equation}
where 
\[
    (c,\gamma) := \int_{X \times X} c(x_1,x_2)d\gamma(x_1,x_2)
\]
and the cost function $c$ is given by 
\begin{equation}\label{HK-cost-fct}
c(x_1,x_2) := \begin{cases}
-\log(\cos^2(\d(x_1,x_2)))\quad &\text{if }\d(x_1,x_2) < \pi/2 \\
+\infty \quad &\text{otherwise}.
\end{cases}
\end{equation}
{\cb As will become apparent in \cref{eqn-H-nice-form}, in the classical metric framework this choice of the cost function is special, as it gives rise to the conic distance over the cone space $Y = X \times \R_+$.}
The entropy functional $\F$ is given by 
\begin{equation}\label{F-func}
    \F(\gamma_i \mid \mu_i) := \int_X F(\sigma_i)d\mu_i + F_{\infty}' \gamma_i^{\perp}(X), \quad \gamma_i := (\pi^i)_{\#} \gamma, \pi^i(x_1,x_2) = x_i.
\end{equation}
In this definition we have used {\cb the }Lebesgue decomposition (see e.g. \cite[Lemma~2.3]{LMS18})
\begin{equation}\label{L-decomp}
    \gamma_i = \sigma_i \mu_i + \gamma_i^\perp, \quad \mu_i = \rho_i \gamma_i + \mu_i^\perp.
\end{equation}
and we further note that the function $F\, \colon\, [0,+\infty) \to [0,+\infty{\cb ) }$ is the relative entropy function, also known as Kullback-Leibler (KL) divergence, and is given by 
\begin{equation}\label{eqn-def-F}
    F(s) := s\log s - s  + 1,\quad \Big(\implies F_\infty':= \lim_{s \to \infty}\frac{F(s)}{s} = +\infty\;\Big),
\end{equation}
thus enforcing $\gamma _i \ll \mu_i$ for any feasible plan $\gamma$ in \eqref{eqn-def-HK}.

\subsection{Reverse formulation and lifting to the cone}\label{sec:lifting-cone}
The ${\rm HK}$ distance admits several useful reformulations, with two of them particularly relevant to the present work. Given the entropy function $F$ defined in \eqref{eqn-def-F}, the corresponding reverse entropy function $R\,\colon\, [0,\infty) \to [0,\infty]$, defined as 
\begin{equation}\label{R-ent}
    R(s) := \begin{cases}
s F(1/s)\quad &\text{if }s>0,\\
F_{\infty}' \quad &\text{if }s=0,
\end{cases}
\end{equation}
is in fact given by 
\begin{equation}\label{eqn-def-R}
    R(s) = s - \log s - 1.
\end{equation}
One can then show \cite[Theorem~3.11]{LMS18} that 
\begin{align}\label{eqn-HK-R}
    {\rm HK}^2(\mu_1,\mu_2) = \inf_{\gamma \in \M(X \times X)}\Big\{ &\int_{X \times X}c(x_1,x_2) + R(\rho_1(x_1)) + R(\rho_2(x_2))d\gamma \\
    &+\sum_{i=1,2}R_{\infty}'(\mu_i - \rho_i\gamma_i)(X)\Big\},\nonumber
\end{align}
where $R_{\infty}' = F(0) = 1$ {\cb and $\rho_1, \rho_2$ are defined through the Lebesgue decomposition \cref{L-decomp}.}

This sets the scene for the lifting to the cone space $Y := X \times\, \R_+$ (up to an equivalence relation for all $y = (x,s) \in Y$ with $s=0$), which is facilitated by the introduction of the \emph{marginal perspective cost function},
\[
    H\,\colon\, X \times \R_+ \times X \times \R_+ \to [0, +\infty],
\]  
given by
\begin{equation}\label{eqn-H-def}
    H(x_1,s_1,x_2,s_2) := \inf_{t >0 }\left\{tc(x_1,x_2) + t R\left(\frac{s_1}{t}\right) + t R\left(\frac{s_2}{t}\right)\right\}.
\end{equation}
A direct computation shows that, for $R$ given by \eqref{eqn-def-R}, $H$ admits an explicit formulation given by
\begin{equation}\label{eqn-H-nice-form}
H(x_1,s_1,x_2,s_2) = s_1 + s_2 - 2s_1^{1/2}s_2^{1/2}\exp\left(\frac{-c(x_1,x_2)}{2}\right).
\end{equation}
It can then be shown \cite[Theorem 5.8]{LMS18} that, after lifting to the cone space $Y$, we have 
\begin{equation}\label{HK-cone}
{\rm HK}^2(\mu_1,\mu_2) = \inf_{\alpha \in S(\mu_1,\mu_2)} (H, \alpha),     
\end{equation}
where 
\[
(H,\alpha) = \int_{Y \times Y}H(x_1,s_1,x_2,s_2)d\alpha(x_1,s_1,x_2,s_2)    
\]
and 
\[
S(\mu_1,\mu_2) =\left\{ \alpha \in \M(Y \times Y)\,\mid\, \h_i \alpha = \mu_i \right\},    
\]
where the homogeneous marginal $\h_i\,\colon\, \M(Y \times Y) \to \M(X)$ is defined as 
\begin{equation}\label{eqn-def-h-marg}
    \h_i \alpha := \pi^{x_i}_{\#}(s_i \alpha).
\end{equation}
Note that $\h_i \alpha = \mu_i$ ensures that the lifted counterpart to the singular part in \eqref{eqn-HK-R} satisfies 
\begin{equation}\label{R-inf-zero}
    \sum_{i=1,2}R_{\infty}'(\mu_i - \h_i \alpha)(X) = 0.
\end{equation}

\subsection{Unconstrained barycenters for the Hellinger-Kantorovich \linebreak distance}\label{sec:HK-bary}
In what follows, for simplicity, we focus on the case when $X$ is a compact subset of $\R^n$ with the distance $\d$ given by the Euclidean distance. We further suppose we are given $N$ measures
\[
    \vec{\mu} = (\mu_1,\dots,\mu_{N}), \quad \mu_i \in \M(X)
\]
and $N$ weights
\[
    \vec\lambda = (\lambda_1,\dots,\lambda_N),\quad \lambda_i \in \R_+,\quad \sum_{i=1}^N \lambda_i =1.
\]

We will now briefly recall relevant known results about the HK barycenter problem, as studied in \cite{FMS21,CP21}. We will aim to follow the presentation in \cite{FMS21}, with a few relevant notational changes.

\subsubsection{Coupled two-marginal formulation}
The \emph{unconstrained} \linebreak Hellinger-Kantorovich barycenter problem is given by 
\begin{equation}\label{eqn-def-HK-C2M}
{\rm HK}_{\rm C2M}(\vec \mu) = \inf_{\nu \in \M(X)}\left\{\sum_{i=1}^N \lambda_i {\rm HK}^2(\mu_i,\nu)\right\}, 
\end{equation}
where we have used the convention employed in \cite{FMS21} to use the subscript ${\rm C2M}$ to emphasise that this is the coupled two-marginal formulation. We refer to this formulation as \emph{unconstrained} because, in contrast to the new approach to be introduced in Section~\ref{sec:main-results}, the second marginals of optimal plans minimising ${\rm HK}^2(\mu_i,\nu)$ are not constrained to be equal to $\nu$. 

\subsubsection{Multi-marginal formulation}
In analogy with the Wasserstein distance barycenters \cite{AC11}, it is of considerable interest to study whether the HK barycenter problem defined in \eqref{eqn-def-HK-C2M} admits a multi-marginal reformulation. As established in \cite{FMS21,CP21} and further studied in \cite{BMS23}, this is possible starting from the cone formulation described in Section~\ref{sec:lifting-cone}, treating the marginal perspective cost function $H$ defined in \eqref{eqn-H-def} as a given cost function. In particular, in \cite{FMS21}, the authors consider a conic multi-marginal (CMM) formulation, which, using the notation 
\[
  X^N := \underbrace{X \times \dots \times X}_{N \text{ times}},\quad  \vec{x} = (x_1,\dots,x_{N}) \in X^N,\quad  \vec{s} = (s_1,\dots,s_{N}) \in \R_+^N,
\]
is given by 
\begin{equation}\label{eqn-def-hk-tilde}
{\rm HK}_{\rm CMM}(\vec \mu) = \inf_{\alpha \in S(\vec \mu)} (H_{\rm CMM},\alpha),
\end{equation}
where
\[
    (H_{\rm CMM},\alpha) = \int_{Y^N}H_{\rm CMM}(\vec x, \vec s) d\alpha(\vec x, \vec s),
\]
and the cost function is given by 
\begin{equation}\label{eqn-tildeH-MM}
H_{\rm CMM}(\vec x, \vec s) = \inf_{(x,s) \in Y} \sum_{i=1}^N \lambda_i H(x_i,s_i,x,s),
\end{equation}
where $H$ is given by \eqref{eqn-H-def}. The infimum in \eqref{eqn-def-hk-tilde} is taken over the set
\begin{equation}\label{eqn-def-S-vec-mu}
    S(\vec \mu) := \{ \alpha \in \M(Y^N)\,\mid\, \h_i \alpha = \mu_i\},
\end{equation}
where the homogeneous marginal $\h_i\,\colon\, \M(Y^N) \to \M(X)$ is as in \eqref{eqn-def-h-marg}, up to an obvious adjustment from $Y \times Y$ to $Y^N$, and the space $Y^N$ is defined as
\[
    Y^N := \underbrace{Y \times \dots \times Y}_{N \text{ times}}.
\]

Among other results, the following is proven. 
\begin{theorem}[\protect{\cite[Theorem~5.13]{FMS21}}]\label{thm:fms}
It holds that
    \[
        {\rm HK}_{\rm CMM}(\vec \mu) = {\rm HK}_{\rm C2M}(\vec \mu).
    \] 
\end{theorem}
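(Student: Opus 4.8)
The plan is to establish the identity by proving the two inequalities separately, in both cases treating the marginal perspective cost $H$ from \eqref{eqn-H-def} as a fixed cost and working entirely on the cone $Y$. The only structural property of $H$ needed beyond its definition is that it is jointly $1$-homogeneous in the two mass variables, $H(x_1,\lambda s_1,x_2,\lambda s_2)=\lambda H(x_1,s_1,x_2,s_2)$, which is immediate from \eqref{eqn-H-nice-form} (or from the substitution $t\mapsto\lambda t$ in \eqref{eqn-H-def}).

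For the inequality ${\rm HK}_{\rm CMM}(\vec\mu)\ge{\rm HK}_{\rm C2M}(\vec\mu)$ I would argue by disaggregation. Fix any $\alpha\in S(\vec\mu)$. Since $H$ is lower semicontinuous and coercive in the barycenter mass variable, the infimum defining $H_{\rm CMM}$ in \eqref{eqn-tildeH-MM} is attained for every $(\vec x,\vec s)$, and a standard measurable selection theorem yields a Borel map $(\vec x,\vec s)\mapsto(x^\ast,s^\ast)\in Y$ realising it. Pushing $\alpha$ forward under $(\vec x,\vec s)\mapsto(\vec x,\vec s,x^\ast,s^\ast)$ produces $\hat\alpha\in\M(Y^{N+1})$; set $\nu:=\h_{N+1}\hat\alpha$ (finite, since the explicit minimiser of \eqref{eqn-H-nice-form} gives $\int s^\ast\,d\alpha\le\sum_i\lambda_i\mu_i(X)$ by Jensen), and let $\alpha_i$ be the projection of $\hat\alpha$ onto the $i$-th and $(N+1)$-th cone factors. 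By construction $\h_1\alpha_i=\mu_i$ and $\h_2\alpha_i=\nu$, so $\alpha_i\in S(\mu_i,\nu)$, and because the selection realises the infimum, $(H_{\rm CMM},\alpha)=\sum_i\lambda_i(H,\alpha_i)\ge\sum_i\lambda_i{\rm HK}^2(\mu_i,\nu)\ge{\rm HK}_{\rm C2M}(\vec\mu)$ by \eqref{HK-cone} and \eqref{eqn-def-HK-C2M}. Taking the infimum over $\alpha$ gives the claim.

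The reverse inequality ${\rm HK}_{\rm CMM}(\vec\mu)\le{\rm HK}_{\rm C2M}(\vec\mu)$ is the genuinely delicate direction and requires gluing the two-marginal plans along their common second marginal. Fix a candidate $\nu$ together with (near-)optimal plans $\alpha_i\in S(\mu_i,\nu)$. The obstruction to a direct application of the classical gluing lemma is that the $\alpha_i$ share only their homogeneous second marginals, $\h_2\alpha_i=\nu$, and not their ordinary $Y$-marginals, whereas a glued multi-marginal plan must pair all inputs with a single barycenter cone point $(x,s)$. I would remove this obstruction by a homogeneity-driven normalisation: define $\tilde\alpha_i:=\Psi_\#(s\,\alpha_i)$, where $s$ denotes the barycenter mass and $\Psi(x_i,s_i,x,s):=(x_i,s_i/s,x,1)$ fixes that mass to $1$. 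Using $1$-homogeneity of $H$ one checks $(H,\tilde\alpha_i)=(H,\alpha_i)$, together with $\h_1\tilde\alpha_i=\mu_i$ and $\h_2\tilde\alpha_i=\nu$, while now every $\tilde\alpha_i$ has the \emph{same} ordinary barycenter-cone marginal $\nu\otimes\delta_1$.

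With a common barycenter-cone marginal in place, the classical gluing lemma applies: disintegrating each $\tilde\alpha_i$ over the barycenter point $y=(x,1)$ and taking the conditional product of the input laws yields $\beta\in\M(Y^N\times Y)$ whose $(i,N+1)$ projection is $\tilde\alpha_i$. Marginalising out the barycenter factor gives $\alpha:=(\pi^{Y^N})_\#\beta\in S(\vec\mu)$, and since the retained point $(x,1)$ is an admissible competitor in the infimum \eqref{eqn-tildeH-MM},
\[
(H_{\rm CMM},\alpha)=\int_{Y^{N+1}}H_{\rm CMM}(\vec x,\vec s)\,d\beta\le\int_{Y^{N+1}}\sum_i\lambda_i H(x_i,s_i,x,1)\,d\beta=\sum_i\lambda_i(H,\tilde\alpha_i)=\sum_i\lambda_i(H,\alpha_i).
\]
Taking the infimum over the $\alpha_i$ and then over $\nu$ closes the argument. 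I expect the normalisation step $\alpha_i\mapsto\tilde\alpha_i$ — verifying that it simultaneously preserves both homogeneous marginals and the cost while equalising the ordinary barycenter marginals — to be the main technical obstacle; everything else reduces to measurable selection and the standard gluing lemma.
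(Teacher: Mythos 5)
The paper does not prove this statement itself: it is imported verbatim from \cite[Theorem~5.13]{FMS21}, where it is obtained via a convex relaxation of $H_{\rm CMM}$ and the associated dual problems, precisely because the infimum in \eqref{eqn-tildeH-MM} has no closed form. Your attempt at a direct primal proof is therefore a genuinely different route. Your ``$\geq$'' direction is sound: after eliminating $s$ explicitly via \eqref{eqn-H-nice-form}, the remaining maximisation of the continuous function $x\mapsto\sum_i\lambda_i\sqrt{s_i}\,e^{-c(x_i,x)/2}$ over the compact set $X$ is attained, measurable selection applies, and your Jensen bound makes $\nu$ finite.

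The gap is in the normalisation step of the ``$\leq$'' direction. The map $\alpha_i\mapsto\tilde\alpha_i=\Psi_\#(s\,\alpha_i)$ multiplies by the barycenter mass $s$ and therefore annihilates the restriction of $\alpha_i$ to the apex set $\{s=0\}$ of the second cone factor. But an optimal conic plan for ${\rm HK}^2(\mu_i,\nu)$ must charge this set whenever part of $\mu_i$ is destroyed rather than transported (for instance whenever ${\rm supp}\,\mu_i$ contains points at distance at least $\pi/2$ from ${\rm supp}\,\nu$): the constraint $\h_1\alpha_i=\mu_i$ forces all of the mass of $\mu_i$ to appear with $s_i>0$, and the destroyed portion can only be paired with $s=0$ without violating $\h_2\alpha_i=\nu$. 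Consequently both of your claimed identities fail in general: one gets $\h_1\tilde\alpha_i=\mu_i-\pi^{x_i}_\#\bigl(s_i\,\alpha_i|_{\{s=0\}}\bigr)$, which is strictly smaller than $\mu_i$, and $(H,\tilde\alpha_i)=(H,\alpha_i)-\int_{\{s=0\}}s_i\,d\alpha_i$, since $H(x_i,s_i,x,0)=s_i$ by \eqref{eqn-H-nice-form}. The glued plan then fails the marginal constraints defining $S(\vec\mu)$. The repair is to carry the apex part along explicitly: add to the glued plan, for each $i$, the push-forward of $\alpha_i|_{\{s=0\}}$ under the map placing $(x_i,s_i)$ in the $i$-th slot and the cone apex in every other slot. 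This restores $\h_i\alpha=\mu_i$ without affecting the other homogeneous marginals, and taking $s=0$ as competitor in \eqref{eqn-tildeH-MM} bounds the added cost by $\lambda_i\int_{\{s=0\}}s_i\,d\alpha_i$, which is exactly the term you dropped. With that correction your gluing argument closes, and it gives a more elementary, HK-specific proof than the duality route of \cite{FMS21}, at the price of relying on the explicit formula \eqref{eqn-H-nice-form}.
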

The result is very difficult to obtain because the minimisation in \eqref{eqn-tildeH-MM} does not admit an explicit solution and thus a convex relaxation of the perspective cost function ${H}_{\rm CMM}$ needs to be introduced and the associated dual problems considered. Furthermore, it is shown in \cite[Section 8]{FMS21}, that {\cb the unconstrained Hellinger-Kantorovich barycenter problem ${\rm HK}_{\rm C2M}(\vec \mu)$ does not admit an original space -based multi-marginal formulation (the so-called soft multi-marginal formulation). This} 
is a significant deviation from the Wasserstein barycenters (see Section~\ref{sec:HKvsW-bary} for an extended discussion on this). It is thus of considerable interest to explore what, ideally minimal, changes are needed to ``simplify'' the framework developed in \cite{FMS21,CP21,BMS23} and e.g. relate it to the recently explored idea of unbalanced multi-marginal optimal transport \cite{BvLNS23} (see Section~\ref{sec:comp-UMOT} for an extended discussion). This is what we will discuss now. 

\section{Main results}\label{sec:main-results}
\subsection{Conic multi-marginal formulation}\label{sec:CMM}
The starting point of the present work is a new conic multi-marginal formulation of, what will turn out to be, the \emph{constrained} HK barycenter problem. In contrast to \eqref{eqn-def-hk-tilde}, we introduce it as
\begin{equation}\label{eqn-def-hk-mm}
\overline{\rm HK}_{\rm CMM}(\vec \mu) := \inf_{\alpha \in S(\vec \mu)} (\overline H_{\rm CMM}, \alpha),
\end{equation}
where the new \emph{multi-marginal perspective cost function} is given by
\begin{equation}\label{eqn-def-H-MM}
\overline H_{\rm CMM}(\vec x, \vec s) := \inf_{(x,s) \in Y} \inf_{t > 0}\sum_{i=1}^N \Big\{t\big(\lambda_i c(x_i,x) + \lambda_i R\left(\tfrac{s_i}{t}\right) + \lambda_i R\left(\tfrac{s}{t}\right)\big)\Big\}
\end{equation}
and the set $S(\vec \mu)$ is as in \eqref{eqn-def-S-vec-mu}.

The initial idea behind defining such a cost was to look back at the two-marginal formulation and, using the reverse formulation from \eqref{eqn-HK-R}, rewrite it, as much as possible, as a collection of integrals with respect to two-marginal plans ${\gamma_i \in \M(X \times X)}$, namely 
\begin{align*}
\cb {\rm HK}_{\rm C2M}(\vec \mu) = \inf_{\nu \in \M(X)} \sum_{i=1}^N &\cb \inf_{\gamma_i \in \M(X \times X)} \Big( \lambda_i(\mu_i - \rho_{i,1}\gamma_{i,1})(X) + \lambda_i (\nu - \rho_{i,2}\gamma_{i,2})(X) \\ 
&\cb + \int_{X \times X} \lambda_i c(x_1,x_2) + \lambda_i R(\rho_{i,1}(x_1) + \lambda_i R(\rho_{i,2}(x_2))) d\gamma_i\Big)
\end{align*}
where, similarly to \eqref{L-decomp}, we have used the Lebesgue decomposition
\begin{subequations}\label{L-decomp-multi}
    \begin{alignat}{3}
        \gamma_{i,1} &= \sigma_{i,1} \mu_i + \gamma_{i,1}^\perp, &&\quad\quad \mu_i &= \rho_{i,1} \gamma_{i,1} + \mu_i^\perp\\
        \gamma_{i,2} &= \sigma_{i,2}\nu + \gamma_{i,2}^{\perp}, &&\quad\quad \nu &= \rho_{i,2} \gamma_{i,2} + \nu^\perp.
    \end{alignat}
\end{subequations}
\begin{remark}[Potential notational confusion]
    To avoid notation becoming too cumbersome, in the coupled-two-marginal formulations, we use $\gamma_i \in \M(X \times X)$ to denote the $i$th two-marginal plan and $\gamma_{i,j} \in \M(X)$ to denote the $j$th marginal of the $i$th plan. In contrast, in the multi-marginal formulation, we have a multi-marginal plan $\gamma \in M(X^N)$ and $\gamma_i \in M(X)$ denotes its $i$th marginal. This can be particularly confusing when $N=2$ and we will try to always indicate the space to which $\gamma_i$ belongs to minimise potential confusion.
\end{remark}

As first attempt, by trying to mimic the Wasserstein barycenters theory, a candidate for a multi-marginal cost function is given by
\[
\inf_{(x,s) \in Y} \sum_{i=1}^N \Big\{\big(\lambda_i c(x_i,x) + \lambda_i R\left(s_i\right) + \lambda_i R\left(s\right)\big)\Big\}.
\]
Secondly, to mimic the lifting to the cone strategy developed in \cite{LMS18} and recalled in Section~\ref{sec:lifting-cone}, to arrive at a multi-marginal perspective cost function, we further introduce an extra infimization over $t>0$, 
\[
\overline H_{\rm CMM}(\vec x, \vec s) = \inf_{(x,s) \in Y} \inf_{t > 0}\sum_{i=1}^N \Big\{t\big(\lambda_i c(x_i,x) + \lambda_i R\left(\tfrac{s_i}{t}\right) + \lambda_i R\left(\tfrac{s}{t}\right)\big)\Big\}.
\]

To directly compare with the cost function $H_{\rm CMM}$ considered in \cite{FMS21}, we recall that it is given by
\[
H_{\rm CMM}(\vec x, \vec s) = \inf_{(x,s) \in Y} \sum_{i=1}^N \inf_{t_i > 0}\Big\{t_i\big(\lambda_i c(x_i,x) + \lambda_i R\left(\tfrac{s_i}{t_i}\right) + \lambda_i R\left(\tfrac{s}{t_i}\right)\big)\Big\}.
\]
The difference is thus that in the known multi-marginal formulation, the extra infimization over $t_i >0$ is introduced separately for each two-marginal problem. The key distinction is that in the new approach, we ``exchange'' the order of taking the infimum over $t_i > 0$ and the summation in $i$ and as a result only infimize over a joint multi-marginal $t>0$. The idea of a multi-marginal perspective function is adapted from \cite{BD23}, where a similar notion was used to study the entropic regularisation of the unbalanced optimal transport problems.

We begin by proving the following.
\begin{lemma}\label{lem-1}
The multi-marginal perspective cost function $\overline H_{\rm CMM}$ defined in \eqref{eqn-def-H-MM} can be equivalently written as
\begin{align*}
\overline H_{\rm CMM}(\vec x, \vec s) &= \inf_{(x,s) \in Y} \left( \sum_{i=1}^N \lambda_i s_i + s - 2s^{\tfrac12}\left(\prod_{{\cb j}=1}^N s_{\cb j}^{\tfrac{\lambda_{\cb j}}{2}}\right)\exp\left(\frac{-\sum_{{\cb k}=1}^N \lambda_{\cb k} c(x_{\cb k},x)}{2}\right)\right)\\
&= \inf_{s \in \R_+} \left( \sum_{i=1}^N \lambda_i s_i + s - 2s^{\tfrac12}\left(\prod_{{\cb j}=1}^N s_{\cb j}^{\tfrac{\lambda_{\cb j}}{2}}\right)\exp\left(\cb -\frac12\inf_{x \in X}\sum_{k=1}^N \lambda_k c(x_k,x)\right)\right)\\
&= \sum_{i=1}^N \lambda_i s_i - \prod_{{\cb j}=1}^N s_{\cb j}^{\lambda_{\cb j}}\exp\left(-\inf_{x \in X} \sum_{{\cb k}=1}^N \lambda_{\cb k} c(x_{\cb k},x)\right).
\end{align*}
\end{lemma}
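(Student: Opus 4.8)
The plan is to evaluate the nested infimum in \eqref{eqn-def-H-MM} in stages, exploiting the explicit form of $R$ given by \eqref{eqn-def-R}. First I would carry out the inner infimum over $t>0$ in closed form. Writing out the summand using $R(u) = u - \log u - 1$, the total expression to be minimised over $t$ has the structure $At - Bt\log(1/t) + (\text{terms involving } s_i, s)$ after collecting, and since $\sum_i \lambda_i = 1$ the coefficient of the $t\log t$ term simplifies neatly. Concretely, grouping the $t R(s_i/t)$ and $t R(s/t)$ contributions yields, for each $i$, a term of the form $\lambda_i\big(s_i - t\log(s_i/t) - t\big) + \lambda_i\big(s - t\log(s/t) - t\big)$ plus $t\lambda_i c(x_i,x)$. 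Summing in $i$ and using $\sum_i\lambda_i=1$, the explicit $s_i$ and $s$ pieces pull out as $\sum_i \lambda_i s_i + s$ (the latter because $\sum_i \lambda_i = 1$), leaving a function of $t$ alone whose derivative I set to zero.

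Rather than redo this computation from scratch, the cleaner route is to recognise that the inner structure is exactly the marginal perspective cost construction already performed in \eqref{eqn-H-def}--\eqref{eqn-H-nice-form}, only with a single shared $t$ across all marginals. So the second approach I would take is: observe that for fixed $(x,s)$ the objective $\inf_{t>0}\sum_i\{t\lambda_i c(x_i,x) + t\lambda_i R(s_i/t) + t\lambda_i R(s/t)\}$ is a one-dimensional perspective-type minimisation whose minimiser is found by elementary calculus. Differentiating in $t$, using $R'(u) = 1 - 1/u$, gives an optimality condition that solves explicitly for the optimal $t^\ast$, and substituting back produces the first displayed line, namely $\sum_i \lambda_i s_i + s - 2 s^{1/2}\big(\prod_j s_j^{\lambda_j/2}\big)\exp\big(-\tfrac12\sum_k \lambda_k c(x_k,x)\big)$. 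The geometric-mean factor $\prod_j s_j^{\lambda_j/2}$ arises precisely because the $t\log t$ coefficients combine with weights $\lambda_j$ summing to one, turning an arithmetic combination of logarithms into a weighted-geometric mean upon exponentiation.

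The second equality is then immediate: the only $x$-dependence sits inside the exponential through $\sum_k \lambda_k c(x_k,x)$, and since $t \mapsto \exp(-t/2)$ is decreasing while the prefactor $2s^{1/2}\prod_j s_j^{\lambda_j/2}$ is nonnegative, minimising over $x\in X$ amounts to minimising $\sum_k \lambda_k c(x_k,x)$ inside the exponent, which lets me pull the $\inf_x$ inside and reduce the remaining infimum to one over $s\in\R_+$ only. For the third and final equality I perform the outer infimum over $s\in\R_+$ explicitly: abbreviating $P := \prod_j s_j^{\lambda_j/2}\exp\big(-\tfrac12\inf_x\sum_k\lambda_k c(x_k,x)\big)$, the expression is $\sum_i\lambda_i s_i + s - 2 s^{1/2}P$, a scalar function of $s$ minimised at $s^\ast = P^2$, giving minimal value $\sum_i \lambda_i s_i - P^2 = \sum_i \lambda_i s_i - \prod_j s_j^{\lambda_j}\exp\big(-\inf_x\sum_k\lambda_k c(x_k,x)\big)$, which is the claimed formula.

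The main obstacle I anticipate is bookkeeping at the boundary cases rather than any deep difficulty: I must handle $s_i = 0$ (where $R$ takes the value $R_\infty' = 1$ via \eqref{R-ent}) and the possibility that $\inf_x \sum_k \lambda_k c(x_k,x) = +\infty$ (forcing the exponential to vanish), checking that the stated closed forms remain valid as limits and that interchanging $\inf_t$, $\inf_x$, $\inf_s$ is legitimate. Interchanging $\inf_t$ with the finite sum is automatic, and pulling $\inf_x$ through the monotone exponential is justified by the monotonicity argument above; the only care needed is confirming the optimal $t^\ast$ and $s^\ast$ are genuinely attained in the open half-line (or that the infimum is approached as $s\to 0$ when the prefactor degenerates), so that the three expressions coincide exactly rather than merely up to taking infima.
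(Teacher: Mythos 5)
Your proposal is correct and follows essentially the same route as the paper's proof: solve the inner infimum over $t$ by elementary differentiation using the explicit form $R(u)=u-\log u-1$ (yielding the first displayed line), pull the $\inf_x$ into the exponent by monotonicity and nonnegativity of the prefactor, and then minimise the resulting scalar function of $s$ at $s^\ast = P^2$ to get the final formula. Your additional attention to the boundary cases $s_i=0$ and $\tilde c(\vec x)=+\infty$ goes slightly beyond what the paper records explicitly, but does not change the argument.
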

For proof, see Section~\ref{sec-proof-lemma}. The key point here is that the minimizer in $s$ has an explicit formula, hence allowing us to obtain the last equality.

\subsection{Soft multi-marginal formulation}\label{sec:SMM}

The result of Lemma~\ref{lem-1} allows us to characterise the new conic multi-marginal problem as a soft multi-marginal (SMM) problem on the space $X^N$, which is given by
\begin{equation}\label{eqn-def-HK-SMM}
    {\rm HK}_{\rm SMM}(\vec \mu) := \inf_{\gamma \in \M(X^N)} \left\{(\tilde c, \gamma) + \sum_{i} \lambda_i \F(\gamma_i \mid \mu_i)\right\},
\end{equation}
where the cost function $\tilde c\,\colon\, X^N \to \overline \R$ is given by
\begin{equation}\label{eqn-def-c-tilde}
\tilde c(\vec x) = \inf_{x \in X} \sum_{i=1}^N \lambda_i c(x_i,x).
\end{equation}

We first prove that the soft multi-marginal problem has a solution.
\begin{proposition}\label{prop-SMM-sol}
    There exists $\bar \gamma \in \M(X^N)$ such that 
    \[
    {\rm HK}_{\rm SMM}(\vec \mu) = (\tilde c, \bar\gamma) + \sum_{i} \lambda_i \F(\bar\gamma_i \mid \mu_i).
    \]
\end{proposition}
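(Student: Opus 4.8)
The plan is to apply the direct method of the calculus of variations to the functional
\[
J(\gamma) := (\tilde c, \gamma) + \sum_{i=1}^N \lambda_i \F(\gamma_i \mid \mu_i)
\]
on $\M(X^N)$. First I would check that the problem is feasible with finite value: taking the zero measure $\gamma = 0$ gives $(\tilde c, 0) = 0$ and $\F(0 \mid \mu_i) = \int_X F(0)\, d\mu_i = \mu_i(X)$, so that $J(0) = \sum_i \lambda_i \mu_i(X) < \infty$ and the infimum ${\rm HK}_{\rm SMM}(\vec\mu)$ is finite. Fix a minimizing sequence $(\gamma^{(k)})_k$ with $J(\gamma^{(k)}) \le C$. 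The three ingredients I need are: (i) a uniform mass bound on $(\gamma^{(k)})_k$ yielding weak-$*$ compactness, (ii) lower semicontinuity of the cost term, and (iii) lower semicontinuity of the entropy term.

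For the mass bound, the key observation is that every marginal of a measure $\gamma \in \M(X^N)$ carries the same total mass $m := \gamma(X^N)$, so $\gamma_i(X) = m$ for each $i$. Since $c \ge 0$ forces $\tilde c \ge 0$, we have $J(\gamma) \ge \sum_i \lambda_i \F(\gamma_i \mid \mu_i)$, and because $F$ is convex with $F_{\infty}' = +\infty$ each feasible $\gamma_i$ is absolutely continuous with density $\sigma_i$ satisfying $\int_X \sigma_i \, d\mu_i = m$. Jensen's inequality then gives $\F(\gamma_i \mid \mu_i) = \int_X F(\sigma_i)\, d\mu_i \ge \mu_i(X)\, F\!\big(m/\mu_i(X)\big)$, a superlinear function of $m$ for any index with $\mu_i(X) > 0$ and $\lambda_i > 0$. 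Hence $J(\gamma^{(k)}) \le C$ forces $\sup_k \gamma^{(k)}(X^N) < \infty$; since $X^N$ is compact, the set of measures with uniformly bounded mass is weak-$*$ sequentially compact, and I extract $\gamma^{(k)} \rightharpoonup \bar\gamma$ (not relabelled).

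It remains to prove $J(\bar\gamma) \le \liminf_k J(\gamma^{(k)})$. For the cost term I would first argue that $\tilde c$ is lower semicontinuous: the map $(\vec x, x) \mapsto \sum_i \lambda_i c(x_i,x)$ is lower semicontinuous (as $c$ is), and taking the infimum over the compact set $X$ preserves lower semicontinuity, so $\tilde c$ is lower semicontinuous and bounded below; consequently $\gamma \mapsto (\tilde c, \gamma)$ is weak-$*$ lower semicontinuous. For the entropy term, the marginal projection $\gamma \mapsto \gamma_i = (\pi^i)_\# \gamma$ is weak-$*$ continuous, and $\eta \mapsto \F(\eta \mid \mu_i)$ is weak-$*$ lower semicontinuous by the standard lower semicontinuity of relative-entropy functionals (cf.\ \cite{LMS18}). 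Summing these with the nonnegative weights $\lambda_i$ yields the desired inequality, so $\bar\gamma$ attains the infimum.

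The main obstacle I anticipate is establishing the lower semicontinuity of $\tilde c$ cleanly, since it is itself defined through an infimum over $X$: one must combine the joint lower semicontinuity of the extended-real-valued HK cost $c$ with the compactness of $X$, and keep track of the fact that $\tilde c$ may take the value $+\infty$. The coercivity and the entropy lower semicontinuity are comparatively routine once the constant-mass-of-marginals observation is in place, so the crux is handling the variational definition of the cost and the extended-real-valued integrands consistently.
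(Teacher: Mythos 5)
Your proposal is correct and takes essentially the same route as the paper's proof: the direct method, with the zero measure certifying feasibility, a uniform mass bound on the minimizing sequence obtained from the superlinearity of $F$ via the same Jensen-type inequality $\F(\gamma_i \mid \mu_i) \ge \mu_i(X)\, F\big(\gamma(X^N)/\mu_i(X)\big)$, weak-$*$ compactness on the compact space $X^N$, and lower semicontinuity of both the cost term and the entropy terms. The only difference is one of detail rather than substance: you spell out the lower semicontinuity of $\tilde c$ (infimum of an l.s.c.\ integrand over the compact set $X$) and the constant-mass-of-marginals observation, both of which the paper leaves implicit.
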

For proof, see Section~\ref{sec-proof-SMM-sol}. The result follows from simple adjustments to the argument in \cite[Theorem~3.3]{LMS18}, which concerns the case $N=2$. 

This brings us to the first main result of the paper. 
\begin{theorem}\label{thm-1}
The {\cb conic }multi-marginal formulation of the barycenter problem \linebreak  $\overline{\rm HK}_{\rm CMM}(\vec \mu)$ defined in \eqref{eqn-def-hk-mm} admits a soft multi-marginal reformulation given by \eqref{eqn-def-HK-SMM}. In other words,
\[
\overline{\rm HK}_{\rm CMM}(\vec \mu) = {\rm HK}_{\rm SMM}(\vec \mu). 
\]
Furthermore, if $\bar \gamma \in \M(X^N)$ is a solution to ${\rm HK}_{\rm SMM}(\vec \mu)$, then $\bar \alpha \in \M(Y^N)$ given by 
\[
    \bar \alpha := (x_1,\bar \rho_1(x_1),\dots,x_N, \rho_N(x_N))_{\#}\bar \gamma,
\]
where 
\[
    \mu_i = \bar \rho_i \bar \gamma_i + \mu_i^{\perp},
\]
is a solution to $\overline{\rm HK}_{\rm CMM}(\vec \mu)$.
\end{theorem}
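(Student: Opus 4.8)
The plan is to prove the two inequalities $\overline{\rm HK}_{\rm CMM}(\vec\mu)\le{\rm HK}_{\rm SMM}(\vec\mu)$ and $\overline{\rm HK}_{\rm CMM}(\vec\mu)\ge{\rm HK}_{\rm SMM}(\vec\mu)$ separately, throughout exploiting the closed form from Lemma~\ref{lem-1}, $\overline H_{\rm CMM}(\vec x,\vec s)=\sum_i\lambda_i s_i-\big(\prod_j s_j^{\lambda_j}\big)e^{-\tilde c(\vec x)}$, which is $1$-homogeneous in $\vec s$. For the upper bound I would take an arbitrary $\gamma\in\M(X^N)$ with $\F(\gamma_i\mid\mu_i)<+\infty$, use the Lebesgue decomposition $\mu_i=\rho_i\gamma_i+\mu_i^\perp$, and lift $\gamma$ to the cone by declaring the $i$th mass coordinate to be the density, so that the principal part of $\alpha$ is $(x_1,\rho_1(x_1),\dots,x_N,\rho_N(x_N))_\#\gamma$. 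Since this principal part only realises $\h_i\alpha=\rho_i\gamma_i=\mu_i-\mu_i^\perp$, I would restore the hard constraint $\h_i\alpha=\mu_i$ by adding, for each $i$, a component living on the $i$th axis of the cone (with $s_j=0$ for $j\ne i$) whose $i$th homogeneous marginal is $\mu_i^\perp$; on such points $\overline H_{\rm CMM}$ reduces to $\lambda_i s_i$, so this component contributes exactly $\lambda_i\mu_i^\perp(X)$, matching the singular term of $\F(\gamma_i\mid\mu_i)$. A one-line pointwise computation using $\sum_i\lambda_i=1$ and $R(s)=s-\log s-1$ then gives $\overline H_{\rm CMM}(\vec x,(\rho_1,\dots,\rho_N))=\tilde c(\vec x)+\sum_i\lambda_i R(\rho_i)-R(a)$ with $a:=\big(\prod_j\rho_j^{\lambda_j}\big)e^{-\tilde c}$, so that the total lifted cost equals $(\tilde c,\gamma)+\sum_i\lambda_i\F(\gamma_i\mid\mu_i)-\int R(a)\,d\gamma$. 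As $R\ge 0$ this is bounded above by the soft cost, and taking the infimum over $\gamma$ yields $\overline{\rm HK}_{\rm CMM}(\vec\mu)\le{\rm HK}_{\rm SMM}(\vec\mu)$.

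For the reverse inequality I would argue through the dual of the soft problem. For any feasible $\alpha\in S(\vec\mu)$ the constraint $\h_i\alpha=\mu_i$ means $\int f(x_i)s_i\,d\alpha=\int f\,d\mu_i$ for all $f$; in particular $\int s_i\,d\alpha=\mu_i(X)$, whence $(\overline H_{\rm CMM},\alpha)=\sum_i\lambda_i\mu_i(X)-\int\big(\prod_j s_j^{\lambda_j}\big)e^{-\tilde c}\,d\alpha$. Fixing any potentials $\varphi_i\colon X\to\R$ that are dual-feasible for ${\rm HK}_{\rm SMM}$, i.e.\ $\sum_i\varphi_i(x_i)\le\tilde c(\vec x)$, I would combine $e^{-\tilde c}\le\prod_i e^{-\varphi_i}$ with the weighted arithmetic--geometric mean inequality applied to $a_i=e^{-\varphi_i/\lambda_i}s_i$ to obtain the pointwise bound $\big(\prod_j s_j^{\lambda_j}\big)e^{-\tilde c}\le\sum_i\lambda_i e^{-\varphi_i(x_i)/\lambda_i}s_i$. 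Integrating against $\alpha$ and using the marginal identity turns the right-hand side into $\sum_i\lambda_i\int e^{-\varphi_i/\lambda_i}\,d\mu_i$, so $(\overline H_{\rm CMM},\alpha)\ge\sum_i\lambda_i\int\big(1-e^{-\varphi_i/\lambda_i}\big)\,d\mu_i$ for every dual-feasible $(\varphi_i)$. Taking the supremum over such potentials and invoking strong duality for the optimal entropy--transport problem ${\rm HK}_{\rm SMM}$ (valid on the compact $X$, c.f.\ \cite[Part~I]{LMS18}) gives $\overline{\rm HK}_{\rm CMM}(\vec\mu)\ge{\rm HK}_{\rm SMM}(\vec\mu)$, completing the equality.

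For the ``furthermore'' I would fix a minimiser $\bar\gamma$ (which exists by Proposition~\ref{prop-SMM-sol}) and read off its first-order optimality conditions: there are potentials $\varphi_i$ with $\sum_i\varphi_i(x_i)=\tilde c(\vec x)$ on $\mathrm{supp}\,\bar\gamma$ and densities $\rho_i=e^{\varphi_i/\lambda_i}$. On the compact $X$ these potentials are bounded, forcing $\bar\gamma_i$ and $\mu_i$ to be mutually absolutely continuous, so $\mu_i^\perp=0$ and the stated lift $\bar\alpha$ is feasible with no axis correction; moreover $\sum_i\varphi_i=\tilde c$ on the support gives $\prod_j\rho_j^{\lambda_j}=e^{\tilde c}$, i.e.\ $a=1$ and $R(a)=0$ $\bar\gamma$-a.e. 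The cost identity from the first paragraph then collapses to $(\overline H_{\rm CMM},\bar\alpha)=(\tilde c,\bar\gamma)+\sum_i\lambda_i\F(\bar\gamma_i\mid\mu_i)={\rm HK}_{\rm SMM}(\vec\mu)=\overline{\rm HK}_{\rm CMM}(\vec\mu)$, so $\bar\alpha$ is a minimiser of the conic problem.

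The step I expect to be the main obstacle is the reverse inequality. A naive projection $\gamma=\pi^{\vec x}_\#\alpha$ does not work: the induced densities are conditional expectations $\rho_i=\mathbb{E}_\alpha[s_i\mid x_i]$ and Jensen's inequality then points the wrong way, leaving an irreducible nonnegative gap $\int R(a)$. Routing the estimate instead through dual-feasible potentials and the arithmetic--geometric mean inequality is precisely what removes this gap, at the price of importing strong duality for ${\rm HK}_{\rm SMM}$. A secondary technical point, relevant both to the lift and to the clean form of $\bar\alpha$ in the statement, is the careful handling of the singular parts $\mu_i^\perp$: one must check that the axis components realise the hard marginal constraint while contributing exactly the singular energy, and that boundedness of the optimal potentials (hence $\mu_i^\perp=0$) indeed holds in the regime of interest.
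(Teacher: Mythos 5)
Your proposal is correct and follows the same skeleton as the paper's proof: both establish the equality by sandwiching the conic value between the dual of ${\rm HK}_{\rm SMM}(\vec \mu)$ and its reverse formulation, using the closed form of $\overline H_{\rm CMM}$ from Lemma~\ref{lem-1} as the link, and both close the loop with strong duality for the optimal entropy--transport problem on $X^N$ imported from \cite[Part~I]{LMS18}. The difference is one of packaging: where the paper identifies $\overline H_{\rm CMM}$ with the marginal perspective function $H_{\rm SMM}(\vec x,\vec s)=\inf_{t>0}t\big(\sum_i\lambda_i R(s_i/t)+\tilde c(\vec x)\big)$ and then invokes ``obvious adjustments'' of \cite[Theorems~5.5 and 5.8]{LMS18} for the two sandwich inequalities, you prove them by hand --- the lower bound via the weighted arithmetic--geometric mean inequality applied pointwise against dual-feasible potentials, and the upper bound via an explicit cone lift in which the singular parts $\mu_i^\perp$ are carried by axis components contributing exactly $\lambda_i\mu_i^\perp(X)$. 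Both computations check out (in particular the identity $\overline H_{\rm CMM}(\vec x,\vec\rho)=\tilde c(\vec x)+\sum_i\lambda_i R(\rho_i)-R(a)$ with $a=\big(\prod_j\rho_j^{\lambda_j}\big)e^{-\tilde c}$ is correct and is exactly what makes the reverse-formulation comparison work), so your version has the merit of making the sandwich self-contained rather than cited. One caveat on the ``furthermore'' part: your claim that boundedness of the optimal potentials forces $\mu_i^\perp=0$ fails in degenerate configurations (e.g.\ when the supports of the $\mu_i$ are pairwise farther than $\pi/2$ apart, so that $\bar\gamma=0$ and $\mu_i^\perp=\mu_i$), in which case the stated lift $\bar\alpha$ is not even feasible and axis components are again needed; you flag this yourself, and the paper's own statement and its appeal to \cite[Theorem~5.8]{LMS18} carry the same implicit restriction, so this is a shared imprecision rather than a defect specific to your argument.
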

For proof, see Section~\ref{sec-proof-thm-1}.\\

\subsection{Constrained coupled two-marginal formulation}\label{sec:CC2M}
It is perhaps tempting to think that the new {\cb conic} multi-marginal formulation {\cb $\overline {\rm HK}_{\rm CMM}(\vec \mu)$ defined in \eqref{eqn-def-hk-mm}} simply coincides with the unconstrained coupled two-marginal {\cb barycenter problem ${\rm HK}_{\rm C2M}(\vec \mu)$} introduced in \eqref{eqn-def-HK-C2M}{\cb . This cannot be the case, however, since, on the one hand, through Theorem~\ref{thm-1} we know that $\overline {\rm HK}_{\rm CMM}(\vec \mu)$ is equivalent to the soft multi-marginal reformulation ${\rm HK}_{\rm SMM}(\vec \mu)$, and on the other, through \cite[Section~8]{FMS21}, we know that the unconstrained problem ${\rm HK}_{\rm C2M}(\vec \mu)$ is not equivalent to it.}
It turns out, however, that the difference {\cb between $\overline {\rm HK}_{\rm CMM}(\vec \mu)$ and ${\rm HK}_{\rm C2M}(\vec \mu)$} can be described quite explicitly, namely it lies in introducing an extra constraint in the coupled two-marginal formulation. We define a \emph{constrained coupled two-marginal} (CC2M) HK barycenter problem, given by
\begin{equation}\label{eqn-def-HK-CC2M}
    {\rm HK}_{\rm CC2M}(\vec \mu) := \inf_{\nu \in \M(X)}\left\{\sum_{i=1}^N \lambda_i \overline{\rm HK}^2(\mu_i,\nu)\right\}, 
\end{equation}
where, in contrast to the usual formulation of the HK distance defined in \eqref{eqn-def-HK}, we have 
\[
    \overline{\rm HK}^2(\mu_i,\nu) := \inf_{\gamma \in \M(X \times X)}\Big\{ (c,\gamma) + \F(\gamma_1 \mid \mu_i) + \overline \F(\gamma_2 \mid \nu) \Big\},
\]
{\cb and, reusing the general entropy functional notation \eqref{F-func},}
\begin{equation}\label{eqn-bar-F}
    \overline \F(\gamma_i \mid \mu_i) := \int_X \overline F(\sigma_i)d\mu_i + \overline F_{\infty}' \gamma_i^{\perp}(X), \quad \overline F(s) = \begin{cases} 0,\;&\text{ if } s = 1,\\ +\infty,\;&\text{ otherwise.} \end{cases}
\end{equation}
In other words, {\cb $\overline F$ is the convex indicator $\iota_{\{1\}}$ and thus} we impose a hard constraint on the second marginal of $\gamma$ to exactly coincide with $\nu$. This of course implies that $\overline{\rm HK}$ is no longer a metric, but nonetheless retains some features of the ${\rm HK}$ distance, as we still use the same cost function and the entropy functional for the first marginal remains the same. In particular, the one-sided hard-constraint setup is covered by the general theory of optimal entropy-transport problems developed in \cite[Part I]{LMS18} -- see Example E.8 in Section~3.3 therein. As a result, we can show that ${\rm HK}_{\rm CC2M}(\vec \mu)$ admits at least one solution.
\begin{proposition}\label{prop-CC2M-sol}
    There exists $\bar \nu \in \M(X)$ such that
    \[
        {\rm HK}_{\rm CC2M}(\vec \mu) = \sum_{i=1}^N \lambda_i \overline{\rm HK}^2(\mu_i,\bar \nu).
    \]
\end{proposition}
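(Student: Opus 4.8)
The plan is to apply the direct method of the calculus of variations to the outer minimisation over $\nu \in \M(X)$, reducing finiteness, compactness, and lower semicontinuity to the structure of the inner optimal entropy-transport problem $\overline{\rm HK}^2(\mu_i,\cdot)$. First I would record that the infimum is finite: taking $\nu = 0$ together with $\gamma = 0$ as a competitor gives $\overline\F(0 \mid 0)=0$ and $\F(0\mid\mu_i) = F(0)\,\mu_i(X) = \mu_i(X)$ (since $F(0)=1$), so ${\rm HK}_{\rm CC2M}(\vec\mu) \le \sum_i \lambda_i \mu_i(X) < +\infty$; and since $c \ge 0$, $F \ge 0$, and $\overline F \ge 0$, the objective is nonnegative, placing the infimum in $[0,+\infty)$.

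Next comes coercivity in the total mass. For any feasible $\gamma$ in $\overline{\rm HK}^2(\mu_i,\nu)$ the hard constraint forces $\gamma_2 = \nu$, whence $\gamma_1(X) = \gamma(X\times X) = \nu(X)$, and finiteness forces $\gamma_1 \ll \mu_i$ with density $\sigma$. Discarding the nonnegative cost term and applying Jensen's inequality to the convex $F$ yields
\[
\overline{\rm HK}^2(\mu_i,\nu) \;\ge\; \inf\Big\{\int_X F(\sigma)\,d\mu_i \;:\; \int_X \sigma\,d\mu_i = \nu(X)\Big\} \;=\; \mu_i(X)\,F\!\left(\frac{\nu(X)}{\mu_i(X)}\right)
\]
whenever $\mu_i(X)>0$; the degenerate cases $\mu_i=0$ with $\lambda_i>0$ force $\nu=0$ for finiteness and are trivial. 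Since $F$ grows superlinearly, the right-hand side tends to $+\infty$ as $\nu(X)\to+\infty$, so any minimizing sequence $(\nu_n)$ satisfies $\sup_n \nu_n(X) < +\infty$. As $X$ is compact, bounded subsets of $\M(X) \subset C(X)^*$ are weakly-$*$ sequentially compact by Banach--Alaoglu, so along a subsequence $\nu_n \rightharpoonup^* \bar\nu$.

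It then remains to prove weak-$*$ lower semicontinuity of $J(\nu):= \sum_i \lambda_i \overline{\rm HK}^2(\mu_i,\nu)$, which is the crux. For each $i$ pick near-optimal plans $\gamma_n^{(i)}$ with $\gamma_{n,2}^{(i)} = \nu_n$; their masses equal $\nu_n(X)$ and are uniformly bounded, so (extracting finitely many further subsequences) $\gamma_n^{(i)} \rightharpoonup^* \bar\gamma^{(i)}$. Weak-$*$ continuity of the marginal maps gives $\bar\gamma_2^{(i)} = \bar\nu$, so the hard constraint survives in the limit and $\bar\gamma^{(i)}$ is feasible for $\overline{\rm HK}^2(\mu_i,\bar\nu)$. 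Using that $c$ is nonnegative and lower semicontinuous (hence $\gamma \mapsto (c,\gamma)$ is weak-$*$ lsc) together with the weak-$*$ lower semicontinuity of $\gamma_1 \mapsto \F(\gamma_1 \mid \mu_i)$ for the fixed reference $\mu_i$, I get $\liminf_n \overline{\rm HK}^2(\mu_i,\nu_n) \ge (c,\bar\gamma^{(i)}) + \F(\bar\gamma_1^{(i)}\mid\mu_i) \ge \overline{\rm HK}^2(\mu_i,\bar\nu)$. Summing against $\lambda_i$ gives $J(\bar\nu) \le \liminf_n J(\nu_n) = {\rm HK}_{\rm CC2M}(\vec\mu)$, and since $\bar\nu$ is admissible it attains the infimum.

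The main obstacle is precisely this lower semicontinuity step: because $\overline{\rm HK}^2(\mu_i,\cdot)$ is itself an infimum over plans, one must simultaneously control the minimising plans and verify that the hard marginal constraint $\gamma_2 = \nu$ is stable under weak-$*$ convergence. This is exactly what the theory of optimal entropy-transport problems in \cite[Part~I]{LMS18} provides, with the one-sided indicator entropy $\overline F = \iota_{\{1\}}$ of Example~E.8 therein, guaranteeing both the lsc and the existence of optimal plans $\gamma^{(i)}$ for fixed $\nu$; invoking it streamlines the argument and disposes of the remaining Lebesgue-decomposition and measurability technicalities.
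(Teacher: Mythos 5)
Your proof is correct and follows essentially the same route as the paper: the direct method applied to $J(\nu)=\sum_i\lambda_i\overline{\rm HK}^2(\mu_i,\nu)$, with feasibility via the zero measure, a uniform mass bound on the minimising sequence, weak-$*$ compactness, extraction of weak-$*$ convergent (near-)optimal plans whose second marginals are pinned to $\nu_n$ by the hard constraint, and lower semicontinuity of the cost and entropy terms to pass to the limit. The only cosmetic differences are that you obtain the mass bound from Jensen's inequality and the superlinearity of $F$ where the paper uses $\overline{\rm HK}^2(\mu,\nu)\geq(\sqrt{\mu(X)}-\sqrt{\nu(X)})^2$, and that you work with near-optimal rather than exact inner minimisers, which the paper takes from \cite[Theorem~3.3]{LMS18}.
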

For proof, see Section~\ref{sec:proof-CC2M-sol}.

We further note that the idea of a one-sided hard constraint was pioneered in \cite{BvLNS23}, albeit in a slightly different context (see Section~\ref{sec:comp-UMOT} for an extended discussion). 

This brings us to the second main result of the paper. 
\begin{theorem}\label{thm-2}
    The constrained coupled two-marginal formulation of the barycenter problem ${\rm HK}_{\rm CC2M}(\vec \mu)$ defined in \eqref{eqn-def-HK-CC2M} admits a soft multi-marginal reformulation given by \eqref{eqn-def-HK-SMM}. In other words, 
    \[
        {\rm HK}_{\rm CC2M}(\vec \mu) = {\rm HK}_{\rm SMM}(\vec \mu).
    \]
    {\cb Furthermore, if $\bar \gamma \in \M(X^N)$ is a solution to ${\rm HK}_{\rm SMM}(\vec \mu)$, then either \\
    (i) $\bar \gamma$ is the zero measure on $X^N$, in which case the zero measure on $X$ is a solution to ${\rm HK}_{\rm CC2M}(\vec \mu)$; or \\
     (ii)~$\bar \gamma$ is not the zero measure on $X^N$, in which case $\bar \nu = T_{\#}\bar \gamma$ is a solution to ${\rm HK}_{\rm CC2M}(\vec \mu)$. Here $T\,\colon\,A \to X$ is a mapping from 
    \[
    A := \left\{ \vec x = (x_1,\dots,x_N) \in X^N \mid \bigcap_{i=1}^N B_{\frac{\pi}{2}(x_i)} \neq \emptyset\right\}
    \]
    to the unique solution to 
    \[
        \inf_{x \in X} \sum_{i=1}^N \lambda_i c(x_i,x),
    \]
    which exists for all $\vec x \in A$.     
    }
\end{theorem}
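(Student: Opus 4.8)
The plan is to prove the identity ${\rm HK}_{\rm CC2M}(\vec\mu)={\rm HK}_{\rm SMM}(\vec\mu)$ by establishing the two inequalities directly, reading off the solution correspondence from the construction used for the ``$\le$'' direction; I would not route through Theorem~\ref{thm-1}, since the map $T$ delivers everything in one stroke. A preliminary observation I would use throughout is that the optimal SMM value is finite: it is bounded above by the value $\sum_i\lambda_i\mu_i(X)$ attained at $\gamma=0$, because $\F(0\mid\mu_i)=\mu_i(X)$ (as $F(0)=1$). Combined with $\tilde c\ge 0$, this forces any SMM minimiser $\bar\gamma$ to be concentrated on the set $A$ where $\tilde c<+\infty$, which is exactly the feasibility set of $T$.

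For ${\rm HK}_{\rm CC2M}\ge{\rm HK}_{\rm SMM}$ I would take an arbitrary $\nu\in\M(X)$ and arbitrary feasible two-marginal plans $\gamma^{(i)}\in\M(X\times X)$ with $\gamma^{(i)}_2=\nu$, disintegrate each against its common second marginal as $\gamma^{(i)}=\int_X\gamma^{(i)}_x\otimes\delta_x\,d\nu(x)$, and glue them over $\nu$ into
\[
\Gamma:=\int_X\Big(\textstyle\bigotimes_{i=1}^N\gamma^{(i)}_x\Big)\otimes\delta_x\,d\nu(x)\in\M(X^N\times X),
\]
letting $\gamma$ be the marginal of $\Gamma$ on $X^N$, whose $i$-th marginal is precisely $\gamma^{(i)}_1$. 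Since each fibre of $\Gamma$ carries the barycentre point $x$, that $x$ is available as a competitor in the infimum \eqref{eqn-def-c-tilde}, giving $\tilde c(\vec x)\le\sum_i\lambda_i c(x_i,x)$ $\Gamma$-a.e.; integrating yields $(\tilde c,\gamma)\le\sum_i\lambda_i(c,\gamma^{(i)})$. As $\overline\F(\gamma^{(i)}_2\mid\nu)=0$ and $\gamma_i=\gamma^{(i)}_1$, the SMM objective at $\gamma$ is at most $\sum_i\lambda_i\overline{\rm HK}^2(\mu_i,\nu)$, and taking infima proves the inequality.

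For the reverse inequality ${\rm HK}_{\rm CC2M}\le{\rm HK}_{\rm SMM}$ — which simultaneously proves the ``furthermore'' — I would fix an SMM minimiser $\bar\gamma$ from Proposition~\ref{prop-SMM-sol}. In case (ii), $\bar\gamma\neq 0$, I set $\bar\nu:=T_\#\bar\gamma$ and, for each $i$, $\gamma^{(i)}:=(\pi^i,T)_\#\bar\gamma$ with $\pi^i(\vec x)=x_i$. Then $\gamma^{(i)}_1=\bar\gamma_i$ while $\gamma^{(i)}_2=T_\#\bar\gamma=\bar\nu$, so the hard constraint holds and $\overline\F(\gamma^{(i)}_2\mid\bar\nu)=0$; moreover $\sum_i\lambda_i(c,\gamma^{(i)})=\int_{X^N}\sum_i\lambda_i c(x_i,T(\vec x))\,d\bar\gamma=(\tilde c,\bar\gamma)$, precisely because $T(\vec x)$ is the minimiser realising $\tilde c(\vec x)$. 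Hence $\sum_i\lambda_i\overline{\rm HK}^2(\mu_i,\bar\nu)\le(\tilde c,\bar\gamma)+\sum_i\lambda_i\F(\bar\gamma_i\mid\mu_i)={\rm HK}_{\rm SMM}$, which gives both the inequality and the optimality of $\bar\nu=T_\#\bar\gamma$. Case (i), $\bar\gamma=0$, is degenerate: the only plan in $\M(X\times X)$ with second marginal $0$ is the zero plan, so $\overline{\rm HK}^2(\mu_i,0)=\F(0\mid\mu_i)=\mu_i(X)$, and $\nu=0$ attains $\sum_i\lambda_i\mu_i(X)={\rm HK}_{\rm SMM}$.

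The hard part is making $T$ legitimate, i.e. existence, uniqueness and measurability of the minimiser of $x\mapsto\sum_i\lambda_i c(x_i,x)$ for $\vec x\in A$. Existence follows from lower semicontinuity of $c$ and compactness of $X$. For uniqueness I would prove strict convexity: writing $\exp(-\tfrac12 c(x_i,x))=\cos(\d(x_i,x))$, the map $x\mapsto\cos(\d(x_i,x))$ is positive and strictly concave on $B_{\pi/2}(x_i)$ (its Hessian is negative definite there), so $c(x_i,\cdot)=-2\log\cos(\d(x_i,\cdot))$ is strictly convex, whence $\sum_i\lambda_i c(x_i,\cdot)$ is strictly convex on the convex set $\bigcap_{i:\lambda_i>0}B_{\pi/2}(x_i)$; this forces a unique minimiser, provided $X$ is convex so that the constraint $x\in X$ preserves uniqueness — this convexity is the one standing assumption I would flag explicitly. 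Measurability of $T$ then follows from a standard argmin argument: upper hemicontinuity of the solution correspondence (Berge) together with uniqueness upgrades to continuity of $T$ on $A$, hence Borel measurability, which is all that the pushforwards $(\pi^i,T)_\#\bar\gamma$ and $T_\#\bar\gamma$ require. Since $\bar\gamma$ is concentrated on $A$, $T$ is defined $\bar\gamma$-almost everywhere, closing the argument.
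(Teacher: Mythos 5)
Your proposal is correct and follows essentially the same route as the paper's proof: you establish ${\rm HK}_{\rm CC2M}(\vec\mu)\ge{\rm HK}_{\rm SMM}(\vec\mu)$ by disintegrating the two-marginal plans over their common second marginal $\nu$ and gluing, you obtain the reverse inequality (and the solution correspondence) by pushing the SMM minimiser forward through the argmin map $T$, and you justify $T$ via strict convexity of $x\mapsto-\log\cos^2(\d(x_i,x))$ on $B_{\pi/2}(x_i)$, exactly as in Lemma~\ref{lem-c-tilde}. Your minor deviations --- using arbitrary competitors rather than minimisers in the first direction (so Proposition~\ref{prop-CC2M-sol} is not needed for the equality itself), splitting cases on $\bar\gamma=0$ rather than on $\bigcap_i B_{\pi/2}({\rm supp}\,\mu_i)$, and explicitly addressing the Borel measurability of $T$ and the convexity of $X$ required for uniqueness of the argmin --- are refinements in rigour rather than a different argument.
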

For proof, see Section~\ref{sec-proof-thm-2}.

We note that by combining Theorem~\ref{thm-1} and Theorem~\ref{thm-2} we thus obtain a three-way equality
\[
    {\rm HK}_{\rm CC2M}(\vec \mu) = \overline {\rm HK}_{\rm CMM}(\vec \mu) = {\rm HK}_{\rm SMM}(\vec \mu),
\]
as opposed to the two-way equality for the unconstrained HK barycenter framework for which we have
\[
    {\rm HK}_{\rm C2M}(\vec \mu) = {\rm HK}_{\rm CMM}(\vec \mu).
\]
See Section~\ref{sec:comp-uncon} for an extended discussion on how the two approaches differ.

\section{Discussion}\label{sec:disc}
Before we proceed to the details of proofs, we finish the main part of the paper by discussing how problems ${\rm HK}_{\rm CC2M}(\vec \mu)$ and ${\rm HK}_{\rm SMM}(\vec \mu)$ relate to the recently studied theory of unbalanced multi-marginal optimal transport \cite{BvLNS23} and also how the constrained framework compares with the unconstrained one. We finally also informally examine whether the constrained setup can be seen as a more natural extension of the Wasserstein barycenter problem to the unbalanced setting. 

\subsection{Relation to the unbalanced multi-marginal optimal transport theory}\label{sec:comp-UMOT}

A result complimentary to Theorem~\ref{thm-2} is proven in \cite[Theorem~5.2]{BvLNS23} in the case when both problems are additionally entropy-regularised and for a much wider class of cost functions and entropy functionals.  The key difference is that (a non-regularised counterpart to) the soft multi-marginal formulation considered therein is given by {\cb 
\begin{equation}\label{eqn-hat-HK-SMM}
    \widehat{\rm HK}_{\rm SMM}(\vec \mu) := \inf_{\gamma \in \M(X^N \times X)} (\hat c,\gamma) + \sum_{i=1}^N \lambda_i \F (\gamma_i \mid \mu_i), 
\end{equation}
where
\[
   \hat c(\vec x, x_{N+1}) := \sum_{i=1}^N \lambda_i c(x_i,x_{N+1}), 
\]}
meaning that an extra $X$ space is introduced and the cost function is not defined as an infimum. Putting aside entropic regularisation, formally the non-regularised version of their result is that in fact 
\[
    \widehat{\rm HK}_{\rm SMM}(\vec \mu) = {\rm HK}_{\rm CC2M}(\vec \mu). 
\]

In \cite[Remark~5.5]{BvLNS23} and closing remarks the authors identify a result as in Theorem~\ref{thm-2} as something desirable{\cb , as it would establish sparsity of the optimal plan in \eqref{eqn-hat-HK-SMM} in its last component. {\cb More broadly, such a result fits with wider literature on studying the sparsity of optimal plans in multi-marginal problems \cite{gangbo1998optimal,pass2012local,Pass14,brizzi2024pwassersteinbarycenters,brizzi2024hwassersteinbarycenters}. }} We show that, at least in the HK-related setting, it can be done. Our proof, which will be presented in Section~\ref{sec-proof-thm-2}, relies merely on using the reverse formulation to obtain the inequality ${\rm HK}_{\rm CC2M}(\vec \mu) \geq {\rm HK}_{\rm SMM}(\vec \mu)$ and, at least on the face of it, should apply to a wide class of entropy functions and cost functions (provided that the infimization in \eqref{eqn-def-c-tilde} admits a unique minimizer, or the infimum is equal to $+\infty$). 

We further note that our new conic multi-marginal formulation provides an explicit link between the results of \cite{BvLNS23} and the work on HK barycenters in \cite{FMS21,CP21}, which, to the best of our knowledge, is new. 

\begin{example}
    To showcase that the least-cost approach works in practice, we {\cb reuse the setup of } the example about computing the barycenters of 1D unbalanced Gaussians presented in \cite[Section 6.1]{BvLNS23}. We consider two empirical measures $\mu_1$ and $\mu_2$ obtained from sampling truncated normal distributions $\mathcal{N}(0.2, 0.05)$ and $2\mathcal{N}(0.8,0.08)$ on $[0,1]$ using a uniform grid with 200 points. {\cb To showcase the fact that the least-cost unbalanced approach also makes sense for other cost functions, }
    we use both the quadratic cost and the HK cost in our tests. 

    To approximately solve the soft multi-marginal problem ${\rm HK}_{\rm SMM}(\vec \mu)$, we use the unbalanced Sinkhorn algorithm \cite{CPSV16}, as implemented in the \textsc{POT} (Python Optimal Transport) toolbox \cite{POT}. We publish a Jupyter notebook detailing the computation\footnote{\url{https://github.com/mbuze/CHK_barycenters}}.
    
    The results are presented in Figure~\ref{fig1} and clearly {\cb the barycenters computed are qualitatively comparable the ones shown in} 
    the right column of \cite[Figure~2]{BvLNS23}.
\end{example}

\begin{figure}[htbp]
    \hfill
    \begin{subfigure}[c]{0.99\textwidth}
        \centering
                {\includegraphics[trim = 0 0 0 0,clip,width=\textwidth]{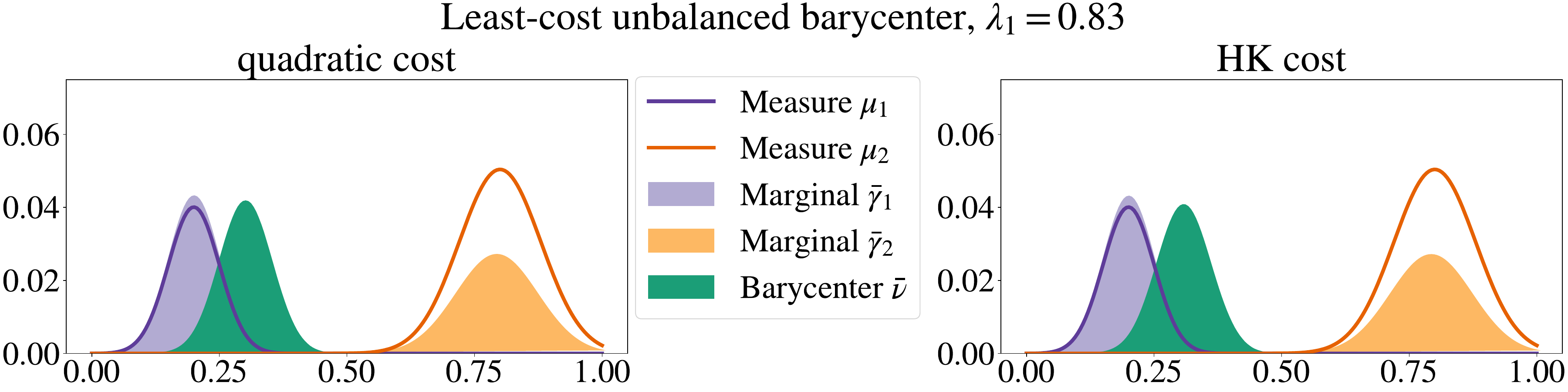}}
    \end{subfigure}
    \hfill\\[10pt]
    \begin{subfigure}[c]{0.99\textwidth}
        \centering
                {\includegraphics[trim = 0 0 0 0,clip,width=\textwidth]{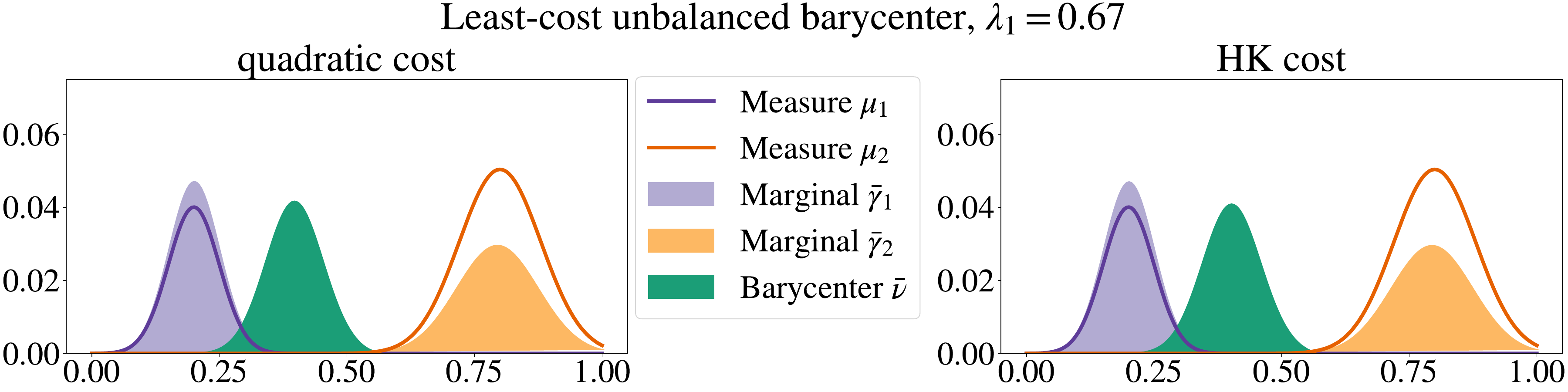}}
    \end{subfigure}
    \hfill\\[10pt]
    \begin{subfigure}[c]{0.99\textwidth}
        \centering
                {\includegraphics[trim = 0 0 0 0,clip,width=\textwidth]{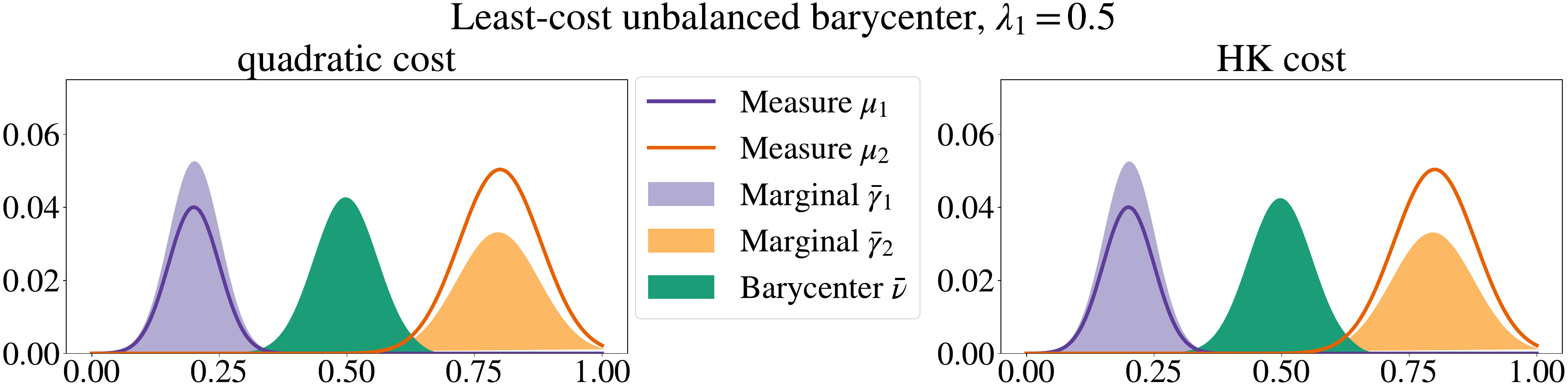}}
    \end{subfigure}
    \hfill\\[10pt]
    \begin{subfigure}[c]{0.99\textwidth}
        \centering
                {\includegraphics[trim = 0 0 0 0,clip,width=\textwidth]{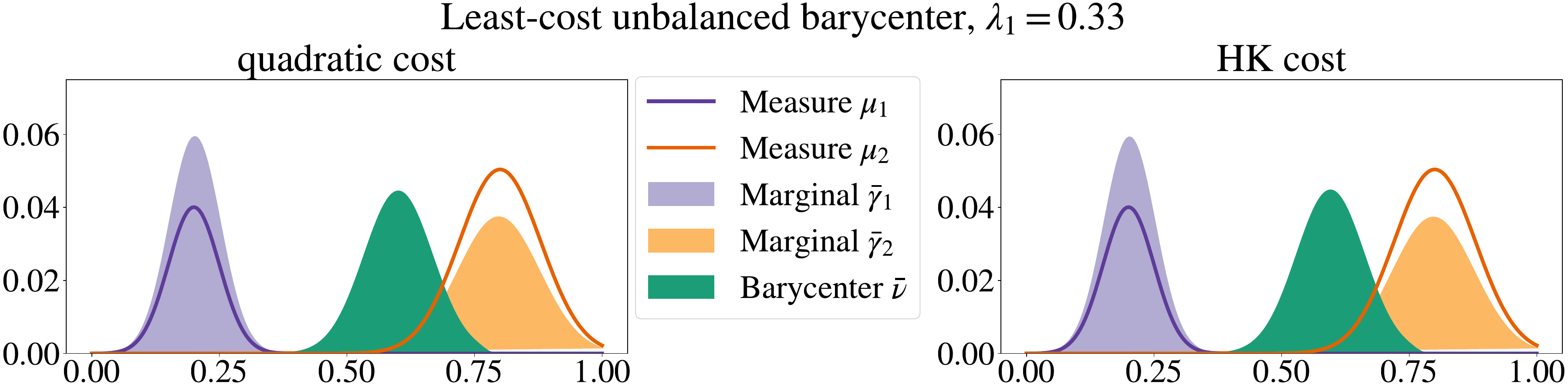}}
    \end{subfigure}
    \hfill\\[10pt]
    \begin{subfigure}[c]{0.99\textwidth}
        \centering
                {\includegraphics[trim = 0 0 0 0,clip,width=\textwidth]{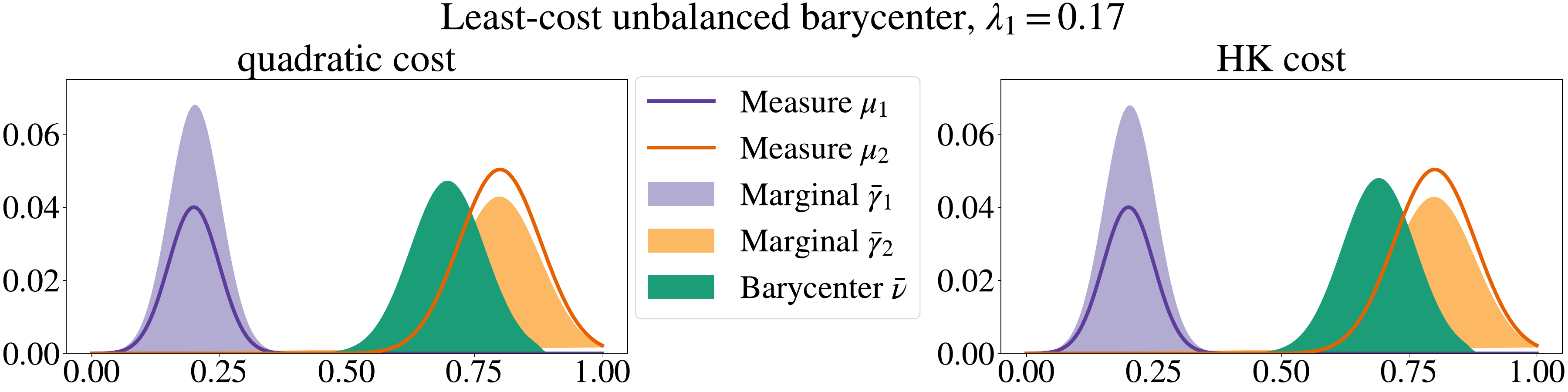}}
    \end{subfigure}
    \hfill
    \caption{Computation of the least-cost unbalanced barycenter between measures $\mu_1$ and $\mu_2$ with weights $\lambda_1$ and $\lambda_2 = 1-\lambda_1$, obtained by approximately solving ${\rm HK}_{\rm SMM}(\vec \mu)$ defined in \eqref{eqn-def-HK-SMM} using quadratic cost (left column) and HK cost (right column). We plot the input measures, the marginals of the optimal plan $\bar \gamma$ and the resulting barycenter $\bar \nu$. 
    } 
       \label{fig1}
\end{figure}

\subsection{Comparison with the unconstrained setting}\label{sec:comp-uncon}
In this section we will gather several observations relating the constrained approach proposed in this paper with previous work on the unconstrained HK barycenters in \cite{FMS21,CP21,BMS23}. 

We start with presenting two ways in which the difference between the two approaches can be described. 
\begin{proposition}\label{prop-C2MvsCC2M}
    It holds that  
    \[
    {\rm HK}_{\rm C2M}(\vec \mu) \leq {\rm HK}_{\rm CC2M}(\vec \mu).    
    \]
    The {\cb equality can only be achieved }
    if the measures $\vec \mu$ are such there exists minimiser $\bar \nu$ of ${\rm HK}_{\rm C2M}(\vec \mu)$ for which there exist minimizers $\bar \gamma^i$ of ${\rm HK}^2(\mu_i, \bar \nu)$ for which $\bar \gamma^i_{2} = \bar \nu$ for each $i$.
\end{proposition}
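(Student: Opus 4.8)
The plan is to reduce both assertions to a single pointwise comparison at the level of the fixed-$\nu$ two-marginal problems, after which everything follows by elementary manipulations. First I would show that for every $\nu \in \M(X)$ and every $i$ one has ${\rm HK}^2(\mu_i,\nu) \leq \overline{\rm HK}^2(\mu_i,\nu)$. The observation driving this is that the two objectives being minimised differ only in the second-marginal term, namely $\F(\gamma_2 \mid \nu)$ in \eqref{eqn-def-HK} against $\overline \F(\gamma_2 \mid \nu)$ in the definition of $\overline{\rm HK}^2$, and that both of these vanish precisely when $\gamma_2 = \nu$, since $F(1)=0$ and $\overline F(1)=0$. Consequently $\overline{\rm HK}^2(\mu_i,\nu)$ is the infimum of the common quantity $(c,\gamma)+\F(\gamma_1 \mid \mu_i)$ over the restricted set $\{\gamma : \gamma_2 = \nu\}$, whereas ${\rm HK}^2(\mu_i,\nu)$ is the infimum of the same quantity over all of $\M(X \times X)$; minimising over a larger set can only lower the value, which gives the bound. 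Multiplying by $\lambda_i \geq 0$, summing over $i$, and taking the infimum over $\nu$ on both sides then yields ${\rm HK}_{\rm C2M}(\vec\mu) \leq {\rm HK}_{\rm CC2M}(\vec\mu)$.

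For the characterisation of equality I would fix a minimiser $\bar\nu$ of ${\rm HK}_{\rm CC2M}(\vec\mu)$, available by Proposition~\ref{prop-CC2M-sol}, and chain the bounds
\[
{\rm HK}_{\rm C2M}(\vec\mu) \leq \sum_{i=1}^N \lambda_i {\rm HK}^2(\mu_i,\bar\nu) \leq \sum_{i=1}^N \lambda_i \overline{\rm HK}^2(\mu_i,\bar\nu) = {\rm HK}_{\rm CC2M}(\vec\mu).
\]
If the two extreme quantities coincide, every inequality in this chain collapses to an equality. The outer equality shows that $\bar\nu$ is itself a minimiser of the unconstrained problem ${\rm HK}_{\rm C2M}(\vec\mu)$; the middle equality, combined with the termwise bound $\overline{\rm HK}^2(\mu_i,\bar\nu) \geq {\rm HK}^2(\mu_i,\bar\nu)$ from the first step and with $\lambda_i \geq 0$, forces the identity ${\rm HK}^2(\mu_i,\bar\nu) = \overline{\rm HK}^2(\mu_i,\bar\nu)$ for every index with $\lambda_i > 0$ (indices carrying zero weight do not contribute to the sum and may be set aside).

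It then remains to exhibit the plans $\bar\gamma^i$. I would take each $\bar\gamma^i$ to be a minimiser of the constrained two-marginal problem $\overline{\rm HK}^2(\mu_i,\bar\nu)$, whose existence is precisely the fixed-marginal optimal entropy-transport statement already underlying Proposition~\ref{prop-CC2M-sol}. By the hard constraint $\bar\gamma^i_2 = \bar\nu$, so $\F(\bar\gamma^i_2 \mid \bar\nu) = 0$, and the value of the soft ${\rm HK}^2$ objective at $\bar\gamma^i$ equals $(c,\bar\gamma^i) + \F(\bar\gamma^i_1 \mid \mu_i) = \overline{\rm HK}^2(\mu_i,\bar\nu)$, which by the termwise identity equals ${\rm HK}^2(\mu_i,\bar\nu)$. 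Hence $\bar\gamma^i$ attains the infimum defining ${\rm HK}^2(\mu_i,\bar\nu)$ while satisfying $\bar\gamma^i_2 = \bar\nu$, which is exactly the asserted condition.

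The argument is essentially soft, and I do not expect any genuine analytic obstacle: the inequality and the collapse of the chain are purely formal, and the construction of the $\bar\gamma^i$ only reuses the constrained minimisers. The one step that is not automatic is the existence of optimal plans for the fixed-$\bar\nu$ constrained problem, which is why I would invoke the optimal entropy-transport framework of \cite{LMS18} rather than reprove compactness and lower semicontinuity by hand; I would also flag that the necessary condition is meaningful only for strictly positive weights, the zero-weight indices being unconstrained by the argument.
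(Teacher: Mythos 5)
Your argument is correct and follows essentially the same route as the paper's (one-line) proof: the inequality comes from the fact that any plan feasible for the hard-constrained problem is a competitor in the soft problem with the same objective value, since $F(1)=0$ makes the soft second-marginal term vanish when $\gamma_2=\nu$. Your treatment of the equality case is more explicit than the paper's (which leaves the chain of inequalities implicit), and your caveat about indices with $\lambda_i=0$ is a fair, if minor, point of precision.
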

\begin{proof}
    This is an immediate consequence of the fact that any minimiser of \linebreak ${\rm HK}_{\rm CC2M}(\vec \mu)$ is a competitor in the unconstrained problem and further that \linebreak {\cb ${F(s) \geq F(1) = 0}$ for any $s \in [0,+\infty)]$.}
\end{proof}

A somewhat less trivial characterisation of the difference between the two approaches follows from the dual description of the conic cost functions. 
\begin{lemma}
    The joint multi-marginal perspective cost function $\overline H_{\rm CMM}$ defined \eqref{eqn-def-H-MM} admits a dual description 
    \begin{align}\label{eqn-tildeH-dual}
        \overline H_{\rm CMM}(\vec x, \vec s) = \inf_{x,s \in Y} \sup_{\phi_1,\dots,\phi_N,\phi_{N+1}} \Big\{& \sum_{i=1}^N -s_i(\lambda_i F)^*(-\phi_i) - s F^*(-\phi_{N+1})\\ & \Bigm| \sum_{i=1}^N \phi_i + \phi_{N+1} \leq \sum_{i=1}^N \lambda_i c(x_i,x). \Big\},\nonumber
    \end{align}
    where $\phi_i \in \R_+$. The dual description of the unconstrained multi-marginal cost function $H_{\rm CMM}$ from \eqref{eqn-tildeH-MM} is on the other hand given by 
    \begin{align*}
    H_{\rm CMM}(\vec x, \vec s) = \inf_{x,s \in Y} \sum_{i=1}^N \sup_{\phi_{i,1}, \phi_{i,2}} \Big\{& -s_i(\lambda_i F)^*(-\phi_{i,1}) - s(\lambda_i F)^*(-\phi_{i,2})\\ &\Bigm| \phi_{i,1} + \phi_{i,2} \leq \lambda_i c(x_i,x) \Big\},
    \end{align*}
    where $\phi_{i,j} \in \R_+$. 
\end{lemma}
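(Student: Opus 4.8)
The plan is to read both identities as convex-duality statements about the perspective terms $t\,R(\cdot/t)$ that appear inside \eqref{eqn-def-H-MM} and \eqref{eqn-tildeH-MM}, and then to interchange the infimum over the penalisation variable with the suprema produced by dualising those terms. The single ingredient I would set up first is the pointwise Fenchel identity for the reverse entropy: for $a>0$, $t>0$, $s\ge 0$,
\[
a\,t\,R\!\left(\tfrac{s}{t}\right) = \sup_{\phi}\Big\{-s\,(aF)^*(-\phi) - t\phi\Big\},
\]
which is nothing but $aR=(aR)^{**}$ written in homogeneous (perspective) form, and which for the explicit $F,R$ of \eqref{eqn-def-F}--\eqref{eqn-def-R} can be confirmed by a one-line computation (the supremum is attained at $\phi=a\log(s/t)$, using $(aF)^*(-\phi)=a(e^{-\phi/a}-1)$). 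This identity is the engine for both formulas.

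First I would treat $\overline H_{\rm CMM}$. Applying the identity with $a=\lambda_i,\ s=s_i$ to each of the $s_i$-terms and with $a=1,\ s=s$ to the common $s$-term (legitimate because $\sum_i\lambda_i=1$, so $\sum_i\lambda_i\,t\,R(s/t)=t\,R(s/t)$), the bracket defining $\overline H_{\rm CMM}$ at fixed $(x,s)\in Y$ becomes
\[
\inf_{t>0}\ \sup_{\phi_1,\dots,\phi_{N+1}}\Big\{\Phi(\vec\phi) + t\Big(\sum_{i=1}^N\lambda_i c(x_i,x) - \sum_{i=1}^N\phi_i - \phi_{N+1}\Big)\Big\},\qquad \Phi(\vec\phi):=\sum_{i=1}^N -s_i(\lambda_iF)^*(-\phi_i) - sF^*(-\phi_{N+1}).
\]
The objective is affine (hence convex, continuous) in $t$ and concave, upper semicontinuous in $\vec\phi$, since each $\phi\mapsto-(\lambda_iF)^*(-\phi)$ is concave and $s_i,s\ge0$. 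After interchanging $\inf_t$ and $\sup_{\vec\phi}$, the inner $\inf_{t>0}$ of an affine function of $t$ equals $-\infty$ when the coefficient of $t$ is negative and equals $\Phi(\vec\phi)$ otherwise; this is exactly what converts the penalisation into the feasibility constraint $\sum_i\phi_i+\phi_{N+1}\le\sum_i\lambda_i c(x_i,x)$ and leaves the value $\Phi(\vec\phi)$. Taking $\inf_{(x,s)\in Y}$ then reproduces \eqref{eqn-tildeH-dual}.

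For $H_{\rm CMM}$ the argument is identical but carried out \emph{termwise}. Because in \eqref{eqn-tildeH-MM} the perspective infimum is taken separately for each index (a separate $t_i$), I would apply the same Fenchel identity inside the $i$-th summand, swap $\inf_{t_i}$ with $\sup_{\phi_{i,1},\phi_{i,2}}$, and read off the per-index constraint $\phi_{i,1}+\phi_{i,2}\le\lambda_i c(x_i,x)$ with value $-s_i(\lambda_iF)^*(-\phi_{i,1})-s(\lambda_iF)^*(-\phi_{i,2})$. Summing over $i$ and taking $\inf_{(x,s)}$ yields the second stated formula. This termwise-versus-joint handling of the penalisation variable is precisely the structural difference emphasised after \eqref{eqn-def-H-MM}: it is what turns the single coupled constraint of \eqref{eqn-tildeH-dual} into $N$ decoupled ones.

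The one genuinely delicate step is the min--max interchange, since neither $t\in(0,\infty)$ nor $\vec\phi\in\R^{N+1}$ ranges over a compact set. Weak duality, $\inf_t\sup_{\vec\phi}\ge\sup_{\vec\phi}\inf_t$, is immediate and gives one inequality for free. For the reverse inequality I would invoke Sion's minimax theorem after a standard truncation: restrict $t$ to a compact interval $[\delta,\delta^{-1}]$ and $\vec\phi$ to a large box, apply Sion on the restriction, and let the truncation parameters go to the boundary while checking no value escapes. The required coercivity is transparent from Lemma~\ref{lem-1}, which already exhibits the explicit minimiser in $t$ (and $s$): generically the infimum in $t$ is attained at an interior $t^*>0$, where differentiating $g(t):=t\sum_i\lambda_i c + \sum_i\lambda_i t R(s_i/t) + t R(s/t)$ and using $\tfrac{d}{dt}[t R(s/t)]=\log(t/s)$ gives the complementary-slackness relation $\sum_i\phi_i^*+\phi_{N+1}^*=\sum_i\lambda_i c(x_i,x)$ at the dual maximiser $\phi_i^*=\lambda_i\log(s_i/t^*),\ \phi_{N+1}^*=\log(s/t^*)$; feeding this back through the per-term identity shows $g(t^*)=\Phi(\vec\phi^*)$, so the two values coincide. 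The only cases needing separate book-keeping are the degenerate ones already visible in Lemma~\ref{lem-1} --- $c(x_i,x)=+\infty$ for some $i$ (vanishing exponential factor) and vanishing coordinates $s_i=0$ or $s=0$ --- which I would dispatch by hand, matching both sides to their common value.
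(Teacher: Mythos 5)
The paper does not actually prove this lemma: it is stated in Section~\ref{sec:comp-uncon} without proof and without a pointer into Section~\ref{sec:proofs}, being implicitly justified by the same Fenchel-duality machinery that appears (also without detail, via \cite[Section~5]{LMS18}) in the dual representation of $H_{\rm SMM}$ inside the proof of Theorem~\ref{thm-1}. Your argument supplies exactly the missing computation and is correct: the perspective identity $a\,t\,R(s/t)=\sup_{\phi}\{-s(aF)^*(-\phi)-t\phi\}$ is the right engine, the observation that $\sum_i\lambda_i\,tR(s/t)=tR(s/t)$ is what produces the single unweighted $F^*$ in the $\phi_{N+1}$ term of \eqref{eqn-tildeH-dual}, and your termwise-versus-joint treatment of the penalisation variable is precisely the structural point the lemma is meant to exhibit. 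Your handling of the min--max interchange is also sound, and in fact your complementary-slackness computation ($\phi_i^*=\lambda_i\log(s_i/t^*)$, $\phi_{N+1}^*=\log(s/t^*)$, with $g(t^*)=\Phi(\vec\phi^*)$ and $\vec\phi^*$ feasible) already closes the gap left by weak duality in the generic case, so the Sion/truncation detour is not needed there; only the degenerate cases ($s_i=0$, $s=0$, $c=+\infty$) require the separate bookkeeping you flag. One small discrepancy worth noting: your dual maximisers $\phi_i^*=\lambda_i\log(s_i/t^*)$ can be negative, so the duality you prove naturally has $\phi_i$ ranging over $\R$ (where $F^*(\phi)=e^{\phi}-1$ is finite), not over $\R_+$ as written in the lemma's statement; restricting to $\R_+$ would in general lower the supremum, so this appears to be a slip in the statement rather than in your proof.
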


Based on this result, we can establish the following. 
\begin{proposition}
  It holds that
  \[
    H_{\rm CMM}(\vec x, \vec s) \leq \overline H_{\rm CMM}(\vec x, \vec s),
  \]
  which, as already established in Proposition~\ref{prop-C2MvsCC2M}, implies that 
  \[
    {\rm HK}_{\rm CMM}(\vec \mu) \leq \overline{\rm HK}_{\rm CMM}(\vec \mu).
  \]
\end{proposition}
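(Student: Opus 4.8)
The plan is to reduce the claimed inequality between the two conic multi-marginal problems to a pointwise inequality between the two cost functions, and then to recognise that the latter is an instance of the elementary fact that a sum of separate infima never exceeds the infimum of the sum. The conceptual point is that the only structural difference between the two costs is that $\overline H_{\rm CMM}$ forces a single scaling parameter $t$ to be shared across all marginals, whereas $H_{\rm CMM}$ lets each marginal choose its own $t_i$; imposing the extra coupling constraint can only raise the infimum.

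First I would fix the free cone variable $(x,s) \in Y$ together with the given data $\vec x, \vec s$, and set, for each $i$,
\[
g_i(t) := t\big(\lambda_i c(x_i,x) + \lambda_i R(\tfrac{s_i}{t}) + \lambda_i R(\tfrac{s}{t})\big), \qquad t > 0.
\]
With this notation the inner part of $\overline H_{\rm CMM}$ is $\inf_{t>0}\sum_{i=1}^N g_i(t)$, while the inner part of $H_{\rm CMM}$ is $\sum_{i=1}^N \inf_{t_i>0} g_i(t_i)$. For every fixed $t>0$ one trivially has $\sum_i \inf_{t_i>0}g_i(t_i) \le \sum_i g_i(t)$, and taking the infimum over $t>0$ on the right yields
\[
\sum_{i=1}^N \inf_{t_i>0} g_i(t_i) \;\le\; \inf_{t>0}\sum_{i=1}^N g_i(t).
\]
Since this holds for each $(x,s)\in Y$, monotonicity of the infimum over $(x,s)$ gives the pointwise bound $H_{\rm CMM}(\vec x,\vec s) \le \overline H_{\rm CMM}(\vec x, \vec s)$.

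To pass to the problem level, I would note that both costs are integrated against the same class of nonnegative measures $\alpha \in S(\vec\mu)$, so the pointwise inequality gives $(H_{\rm CMM},\alpha) \le (\overline H_{\rm CMM},\alpha)$ for every feasible $\alpha$; taking the infimum over the common feasible set $S(\vec\mu)$ delivers ${\rm HK}_{\rm CMM}(\vec\mu) \le \overline{\rm HK}_{\rm CMM}(\vec\mu)$. This is consistent with, and gives an independent route to, the inequality already obtained by combining Proposition~\ref{prop-C2MvsCC2M} with the equivalences ${\rm HK}_{\rm CMM} = {\rm HK}_{\rm C2M}$ and $\overline{\rm HK}_{\rm CMM} = {\rm HK}_{\rm CC2M}$.

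Because the result is essentially built into the primal forms, I do not anticipate a serious obstacle; the only point requiring care is the legitimacy of passing from the pointwise cost inequality to the problem-level one, which rests on the measures being nonnegative and the feasible set being common to both problems. As an alternative verification I would use the dual descriptions of the preceding lemma: from a (near-)optimal dual point $(\phi_{i,1}^*,\phi_{i,2}^*)$ for $H_{\rm CMM}$, set $\phi_i = \phi_{i,1}^*$ and $\phi_{N+1} = \sum_i \phi_{i,2}^*$. Feasibility of the single coupled constraint follows by summing the $N$ separate constraints, and since $(\lambda_i F)^*(y) = \lambda_i F^*(y/\lambda_i)$ with $\sum_i \lambda_i = 1$, convexity of $F^*$ (Jensen's inequality) gives $F^*(-\sum_i \phi_{i,2}^*) \le \sum_i (\lambda_i F)^*(-\phi_{i,2}^*)$; as $s\ge 0$ this shows the constructed point attains at least the value of the $H_{\rm CMM}$ supremum, recovering the same pointwise inequality.
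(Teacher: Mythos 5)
Your proposal is correct, but your primary argument takes a genuinely different and more elementary route than the paper. The paper proves the pointwise bound $H_{\rm CMM}\le \overline H_{\rm CMM}$ by working entirely with the dual descriptions from the preceding lemma: it takes optimal dual points $(\bar\phi_{i,1},\bar\phi_{i,2})$ of the $N$ separate two-marginal dual problems, builds from them competitors $\hat\phi_i=\bar\phi_{i,1}$, $\hat\phi^j_{N+1}=\bar\phi_{j,2}/\lambda_j$ for the coupled dual problem, and then averages over $j$ with weights $\lambda_j$, invoking the scaling identity $(\lambda_j F)^*(\phi)=\lambda_j F^*(\phi/\lambda_j)$. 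You instead observe directly from the primal definitions \eqref{eqn-def-H-MM} and \eqref{eqn-tildeH-MM} that the two costs differ only in whether the scaling parameter $t$ is shared or chosen per marginal, so that for fixed $(x,s)$ one has $\sum_i\inf_{t_i>0}g_i(t_i)\le\inf_{t>0}\sum_i g_i(t)$, and the inequality survives the infimum over $(x,s)$; the passage to ${\rm HK}_{\rm CMM}(\vec\mu)\le\overline{\rm HK}_{\rm CMM}(\vec\mu)$ by integrating against nonnegative $\alpha$ over the common feasible set $S(\vec\mu)$ is also sound, and is in fact a more direct route to the second display than the paper's, which obtains it by combining Proposition~\ref{prop-C2MvsCC2M} with Theorems~\ref{thm:fms}, \ref{thm-1} and~\ref{thm-2}. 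Your argument buys simplicity and robustness (no duality, no attainment of dual suprema needed); the paper's dual route buys consistency with the dual machinery it has just set up and with the comparison lemma preceding the proposition. Your alternative dual verification, with $\phi_{N+1}=\sum_i\phi^*_{i,2}$ and Jensen's inequality for the convex $F^*$, is essentially the paper's argument but with a cleaner competitor: the single coupled constraint then follows by summing the $N$ separate constraints, whereas the paper's choice $\hat\phi^j_{N+1}=\bar\phi_{j,2}/\lambda_j$ requires an additional justification of feasibility that is not spelled out there. So your version of the dual argument is, if anything, tighter than the one in the paper.
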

\begin{proof}
    Fix $(x,s) \in Y$ and suppose $(\bar \phi_{i,1},\bar \phi_{i,2}) \in \R^2$ maximizes 
    \[
        \sup_{\phi_{i,1}, \phi_{i,2}} \Big\{ -s_i(\lambda_i F)^*(-\phi_{i,1}) - s(\lambda_i F)^*(-\phi_{i,2}) \Bigm| \phi_{i,1} + \phi_{i,2} \leq \lambda_i c(x_i,x) \Big\}.
    \]
    We can then set $\hat \phi_i := \bar \phi_{i,1}$ and $\hat \phi^j_{N+1} := \frac{\bar \phi_{j,2}}{\lambda_j}$ for some $j \in \{1,\dots,N\}$, which defines a competitor in \eqref{eqn-tildeH-dual} and thus
    \begin{align*}
        \overline H_{\rm CMM}(\vec x, \vec s) &\geq \inf_{(x,s) \in Y} \sum_{i=1}^N - s_i(\lambda_i F)^*(\hat \phi_i) - sF^*(-\hat \phi^j_{N+1})\\
        &= \inf_{(x,s) \in Y} \sum_{i=1}^N -s_i(\lambda_i F)^*(\bar \phi_{i,1}) - s F^*(-\bar \phi_{j,2}/\lambda_j)
    \end{align*}
    Since this holds for any particular choice of $j$, we can take a convex combination with weights $\vec \lambda$ of the right-hand side and arrive at 
    \begin{align*}
        \overline H_{\rm CMM}(\vec x, \vec s) &\geq \inf_{(x,s) \in Y} \sum_{j=1}^N \lambda_j \left(\sum_{i=1}^N -s_i(\lambda_i F)^*(\bar \phi_{i,1}) - s F^*(-\bar \phi_{j,2}/\lambda_j)\right)\\
        &= \inf_{(x,s) \in Y} \sum_{i=1}^N -s_i(\lambda_i F)^*(\bar \phi_{i,1}) + \sum_{j=1}^N s \lambda_j F^*(-\bar \phi_{j,2} / \lambda_j)\\
        &= H_{\rm CMM}(\vec x, \vec s).
    \end{align*}
    The last equality follows from the fact that{\cb , for any $j$,}
    \[
        {\cb -s(\lambda_j F)^*(-\phi_{j,2}) = -s \lambda_j F^*\left(\frac{-\phi_{j,2}}{\lambda_j}\right),}
    \]
    {\cb which is the standard scaling property of the Legendre transform.}
\end{proof}

We further discuss an explicit example to emphasise that, at least for Dirac masses, the constrained approach differs from the unconstrained one quite considerably. 

\begin{example}[Dirac masses]
    Consider $N$ Dirac masses $\mu_i = \delta_{z_i}$ for $z_i \in X$ and we recall that we assume that $X$ is (a compact subset of) $\R^n$. It readily follows from the analysis of the cost function $\tilde c$ in Lemma~\ref{lem-c-tilde} that if
    \begin{equation}\label{eqn-inter-balls}
        \bigcap_{i=1}^N B_{\frac{\pi}{2}}(z_i) \neq \emptyset,
    \end{equation}
    then the barycenter $\bar \nu$ {\cb obtained by minimizing in ${\rm HK}_{\rm CC2M}(\vec \mu)$ as in Proposition~\ref{prop-CC2M-sol},} is given by a single Dirac mass
    \[
        \bar\nu = M \delta_{T(\vec z)},
    \]
    where $T(\vec z)$ is the mapping
    \[
        \vec z \mapsto \min_{z \in X}\sum_{i=1}^N \lambda_i c(z_i,z)
    \]
    which is shown to exist in Lemma~\ref{lem-c-tilde}, provided that \eqref{eqn-inter-balls} holds. Furthermore, an explicit calculation reveals that the mass $M$ is given by the formula
    \begin{equation}\label{eqn-bary-mass-formula}
        M = \prod_{i=1}^N m_i^{\lambda_i} \exp(-\tilde c(\vec z)) = \prod_{i=1}^N m_i^{\lambda_i} \exp\left(- \sum_{{\cb j}=1}^N \lambda_{\cb j} c(z_{\cb j},T(\vec z))\right),
    \end{equation}
    which can be seen as a variant of a weighted geometric mean for masses of input measures. 

    If, on the other hand, \eqref{eqn-inter-balls} does not hold, then the barycenter is given by the zero measure. On an informal level this is consistent with \eqref{eqn-bary-mass-formula}, since 
    \[
        M \to 0,\;\text{ as }\; \tilde c(\vec z) \to +\infty.
    \]

    This example reveals that, for small distances between source points, the constrained HK barycenter behaves similarly to the Wasserstein barycenter and degenerates if the distance between the support of at least one of the input measures from the supports of other measures is too large. As thoroughly discussed in \cite{BMS23}, a radically different, clustering behaviour can occur in the unconstrained setup. At the same time, it is feasible to imagine a (perhaps not very efficient) clustering algorithm based on the constrained approach, since we can interpret the barycenter being the zero measure as an indication that there are at least two clusters in the data set and we can tune what we consider a cluster by adjusting the length over which transport is preferred over mass creation/annihilation. 
\end{example}

\subsection{Extending Wasserstein barycenters to the unbalanced setting}\label{sec:HKvsW-bary}
Using the hard-constraint entropy functional defined in \eqref{eqn-bar-F}, the multi-marginal formulation of the Wasserstein barycenter problem, as studied in \cite{AC11}, can be formulated as 
\[
    {\rm W}_{\rm MM}(\vec \mu)= \inf_{\gamma \in \M(X^N)}\left\{(\tilde c,\gamma) + \sum_{i=1}^N \lambda_i \overline \F(\gamma_i \mid \mu_i)\right\}, \quad \tilde c(\vec x) := \inf_{x \in X}\sum_{i=1}^N \lambda_i c(x_i,x), 
\]
where now the masses of each $\mu_i$ have to coincide, and, canonically, we have \linebreak ${c(x,y) = |x-y|^2}$. By comparing with the definition of ${\rm HK}_{\rm SMM}(\vec \mu)$ in \eqref{eqn-def-HK-SMM}, the soft multi-marginal formulation is a very intuitive extension of the Wasserstein barycenter problem to the unbalanced setting -- we replace $N$ hard marginal functionals $\overline \F (\gamma_i \mid \mu_i)$ with $N$ soft functionals $\F(\gamma_i \mid \mu_i)$ defined in \eqref{F-func}.

Similarly, the coupled-two-marginal formulation of the Wasserstein barycenter problem can be written as 
\[
    {\rm W}_{\rm C2M}(\vec \mu) = \inf_{\nu \in \M(X)} \left\{\sum_{i=1}^N \lambda_i \inf_{\gamma_i \in \M(X \times X)}\left\{(c,\gamma_i) + \overline \F(\gamma_{i,1} \mid \mu_i) + \overline \F(\gamma_{i,2} \mid \nu)\right\}\right\}.
\]

By directly comparing with the corresponding unconstrained setting, given by ${\rm HK}_{\rm C2M}(\vec \mu)$ and defined in \eqref{eqn-def-HK-C2M}, we see that the difference lies in replacing $2N$ hard-constraint entropy functionals, $\overline \F(\gamma_{i,1}\mid \mu_i)$ and $\overline \F(\gamma_{i,2}\mid \nu)$, with $\F(\gamma_{i,1}\mid \mu_i)$ and $\F(\gamma_{i,2}\mid \nu)$.

On the other hand, in the constrained setting, given by ${\rm HK}_{\rm CC2M}(\vec \mu)$ and defined in \eqref{eqn-def-HK-CC2M}, $N$ hard-constraint entropy functionals $\overline \F(\gamma_{i,1}\mid \mu_i)$ get replaced with ${\F(\gamma_{i,1}\mid \mu_i)}$. In that sense the constrained setting can be seen as being half-way between the Wasserstein barycenter problem and the unconstrained HK barycenter problem. To a significant degree it is the hard constraint that $\gamma_{i,2} = \nu$, that ensures the link between the multi-marginal and coupled-two-marginal formulations for both the Wasserstein and constrained HK {\cb barycenter} can be established. Our result indicate that we thus retain more of the Wasserstein barycenter framework by using the constrained approach.

\section{Conclusion}
We have introduced a new conic multi-marginal barycenter problem related to the Hellinger-Kantorovich metric, which admits a very natural soft multi-marginal formulation which in its spirit exactly matches its Wasserstein metric counterpart. Additionally, we have shown that it corresponds to a coupled-two-marginal formulation with a one-sided hard marginal constraint, which can be argued to be a more natural extension of the idea of a Wasserstein barycenter to the unbalanced setting. Furthermore, our analysis of the conic multi-marginal formulation has established an explicit link between the unbalanced multi-marignal optimal transport framework studied in \cite{BvLNS23} and conic formulation of the HK barycenter problem studied in \cite{FMS21,CP21,BMS23}. We further have extended the results of \cite{BvLNS23} by showing that, as in the Wasserstein metric, the soft multi-marginal formulation can be posed over the space $X^N$ with the cost function defined as "the least cost" \cite{CE10}. 

In the future it would be very interesting to explore whether, based on the ideas developed here (see especially Section~\ref{sec:HKvsW-bary}), the unconstrained HK barycenter problem admits a "generalised" soft-marginal formulation.

\section*{Acknowledgments}
This work was inspired by a great talk on this topic given by Bernhard Schmitzer at the ICMS Workshop: Optimal Transport and Calculus of Variations in December 2023 in Edinburgh, whom I would like to sincerely thank for bringing my attention to this problem and for the discussion we have had on related topics. 

\section{Proofs}\label{sec:proofs}
\subsection{Proof of Lemma~\ref{lem-1}}\label{sec-proof-lemma}
\begin{proof}
Given the explicit form of $R$ in \eqref{eqn-def-R}, for a fixed {\cb $x$, $\vec x$, $s$, and $\vec s$}, we can solve 
\[
    \inf_{t > 0}\sum_{i=1}^N \Big\{t\big(\lambda_i c(x_i,x) + \lambda_i R\left(\tfrac{s_i}{t}\right) + \lambda_i R\left(\tfrac{s}{t}\right)\big)\Big\} 
\]
via a direct computation relying on differentiating in $t$. It readily follows that
\[
    \overline{H}_{\rm CMM}(\vec x, \vec s) = \inf_{(x,s) \in Y} \left( \sum_{i=1}^N \lambda_i s_i + s - 2s^{\tfrac12}\left(\prod_{{\cb j}=1}^N s_{\cb j}^{\tfrac{\lambda_{\cb j}}{2}}\right)\exp\left(\frac{-\sum_{{\cb k}=1}^N \lambda_{\cb k} c(x_{\cb k},x)}{2}\right)\right).
\]
The next step is to recognise that since $s_i \geq 0$ and $s \geq 0$, the monotonicity of the exponential function ensures that we also have 
\begin{equation}\label{eqn-HMM-inf-s}
    \overline{H}_{\rm CMM}(\vec x, \vec s)\hspace*{-2pt} = \hspace*{-2pt}\inf_{s \in \R_+} \left( \sum_{i=1}^N \lambda_i s_i + s - 2s^{\tfrac12}\left(\prod_{{\cb j}=1}^N s_{\cb j}^{\tfrac{\lambda_{\cb j}}{2}}\right)\exp\left(\cb \hspace*{-2pt} -\frac12\sum_{k=1}^N \lambda_k \inf_{x \in X}c(x_k,x)\right)\right).
\end{equation}
The infimum over $s$ can then again be solved directly by differentiating in $s$ and recognising that the optimal $s$ is given by
\[
    s = \left( \left(\prod_{{\cb j}=1}^N s_{\cb j}^{\tfrac{\lambda_{\cb j}}{2}}\right)\exp\left(\frac{-\inf_{x \in X} \sum_{{\cb k}=1}^N \lambda_{\cb k} c(x_{\cb k},x)}{2}\right) \right)^2.
\]
By plugging this formula into \eqref{eqn-HMM-inf-s} and simplifying, we arrive at the final reformulation, given by 
\[
    \overline{H}_{\rm CMM}(\vec x, \vec s) 
    = \sum_{i=1}^N \lambda_i s_i - \prod_{{\cb j}=1}^N s_{\cb j}^{\lambda_{\cb j}}\exp\left(-\inf_{x \in X} \sum_{{\cb k}=1}^N \lambda_{\cb k} c(x_{\cb k},x)\right).
\]
\end{proof}
\subsection{Proof of Proposition~\ref{prop-SMM-sol}}\label{sec-proof-SMM-sol}
\begin{proof}
    Define $\E(\cdot \mid \vec \mu)\,\colon \M(X^N) \to \overline{R}$ as
    \[
        \E(\gamma \mid \vec \mu):= (\tilde c, \gamma) + \sum_{i=1}^N \lambda_i\F(\gamma_i \mid \mu_i).
    \]
    The functional is feasible since when $\gamma = 0$ (the zero measure), we have 
    \[
    \E(0 \mid \vec \mu) = \sum_{i=1}^N \mu_i(X) < \infty.
    \] 
    It is bounded below by zero, thus a minimising sequence $(\gamma_k)_{k \in \mathbb N}$ exists. From the inequality 
        \[
        \E(\gamma \mid \vec \mu) \geq \gamma(X^N) \inf \tilde c + \sum_{i=1}^N \mu_i(X) F\left( \frac{\gamma(X^N)}{\mu_i(X)}\right) \geq \sum_{i=1}^N \mu_i(X) F\left( \frac{\gamma(X^N)}{\mu_i(X)}\right)
    \]
    and the superlinearity of $F$, we can deduce that the masses of $(\gamma_k)_{k \in \mathbb N}$ are bounded, thus the sequence has a weak-* cluster point $\bar \gamma$, meaning that, up to a subsequence, for all continuous and bounded $f$,
    \[
        \int_{X^N} f(\vec x) d\gamma_k(\vec x) \to \int_{X^N} f(\vec x) d\bar \gamma(\vec x).
    \]
    But it has to be a minimiser, since $\tilde c$ and $\F$ are both lower semi-continous. 
\end{proof}
\subsection{Proof of Theorem~\ref{thm-1}}\label{sec-proof-thm-1}
\begin{proof}
The proof of this theorem closely resembles the results detailed in \cite[Section 3]{BD23}, which themselves are inspired by the results presented in \cite[Section~3]{LMS18}. Recall from \eqref{eqn-def-HK-SMM} that the proposed soft multi-marginal (SMM) formulation is given by 
\[
    {\rm HK}_{\rm SMM}(\vec \mu) = \inf_{\gamma \in \M(X^N)} \left\{(\tilde c, \gamma) + \sum_{i} \lambda_i \F(\gamma_i \mid \mu_i)\right\}.
\]
Our aim is to establish that ${\rm HK}_{\rm SMM}(\vec \mu) = \overline{\rm HK}_{\rm CMM}(\vec \mu)$. Using the reverse entropies $R$ from \eqref{eqn-def-R} and the Lebesgue decomposition between $\gamma_i$ and $\mu_i$, 
\[
    \gamma_i = \sigma_i \mu_i + \gamma_i^{\perp}, \quad \mu_i = \rho_i \gamma_i + \mu_i^{\perp}
\]
it follows from an obvious adjustment of \cite[Theorem~3.11]{LMS18} that the problem admits a reverse formulation, given by
\[
{\rm HK}_{\rm SMM}(\vec \mu) = \inf_{\gamma \in \M(X^N)}\underbrace{\left\{\int_{X^N}\left(\tilde c(\vec x) + \sum_{i=1}^N \lambda_i R(\rho_i(x_i))\right) d\gamma + \sum_{i=1}^N \lambda_i (\mu_i - \rho_i\gamma_i)(X)\right\}}_{=: \Rc(\vec \mu, \gamma)},  
\]
where we have used the fact that $R_{\infty}' = F(0) = 1$. 

Likewise, we can replicate the argument in \cite[Proposition~4.3, Theorem~4.11]{LMS18} to conclude that the dual formulation is given by 
\[
    {\rm HK}_{\rm SMM}(\vec \mu) = \sup_{\vec \psi \in \Psi}\left\{ \sum_{i=1}^N (\psi_i,\mu_i)\right\} = \sup_{\vec \phi \in \Phi}\left\{ \sum_{i=1}^N (-(\lambda_i F)^*(-\phi_i),\mu_i)\right\},
\]
where
\[
    \vec \psi = (\psi_1,\dots, \psi_N), \quad \vec \phi = (\phi_1,\dots,\phi_N)
\]
belong to 
\begin{align*}
    \Psi &= \left\{ \vec \psi \,\mid\, \psi_i \in C_b(X), \; (\lambda_i R)^*(\psi_i) \in C_b(X),\; \bigoplus_{i=1}^N (\lambda_i R)^*(\psi_i) \leq \tilde c\right\},\\
    \Phi &= \left\{ \vec \phi \,\mid\, \phi_i \in C_b(X), \; -(\lambda _i F)^*(-\phi_i) \in C_b(X),\; \bigoplus_{i=1}^N \phi_i \leq \tilde c\right\}.
\end{align*}
Note that $(\lambda_i F)^*$ is the Legendre dual of $\lambda_i F$, which is defined, for a generic scalar function $f$, as 
\[
f^*(\phi) := \sup_{s >0}\left(s\phi - f(s)\right) \implies (\lambda_i f)^*(\phi) = \lambda_i f^*\left(\frac{\phi}{\lambda_i}\right).
\]
In our specific case we have
\[
    F^*(\phi) = \exp(\phi) - 1, \quad R^*(\psi) = -\log(1-\psi)
\]
and we note the change of variables relation
\[
    \phi_i = (\lambda_i R)^*(\psi_i), \quad \psi_i = -(\lambda_ i F)^*(-\phi_i).
\]

Finally, we introduce the marginal perspective function 
\[
    H_{\rm SMM}(\vec x, \vec s) := \inf_{t >0} \left\{ t\left(\sum_{i=1}^N \lambda_i R\left(\frac{s_i}{t}\right) + \tilde c(\vec x)\right)\right\},
\]
which, by a direct calculation, admits a formula
\[
    H_{\rm SMM}(\vec x, \vec s) = \sum_{i=1}^N \lambda_i s_i - \prod_{{\cb j}=1}^N s_{\cb j}^{\lambda_{\cb j}}\exp\left(-\tilde c(\vec x)\right).
\]
In other words, since $\tilde c(\vec x) = \inf_{x \in X} \sum_{i=1}^N \lambda_i c(x_i,x)$, we have established that in fact $H_{\rm SMM} = \overline H_{\rm CMM}$ from \eqref{eqn-def-H-MM}.

We also have a dual representation for $H_{\rm SMM}$, namely
\begin{align*}
    H_{\rm SMM}(\vec x, \vec s) &= \sup_{\vec \psi} \left\{ \sum_{i=1}^N s_i \psi_i \, \Bigm\vert \, \sum_{i=1}^N(\lambda_i R)^*(\psi_i) \leq \tilde c(\vec x) \right\}\\
    &= \sup_{\vec \phi} \left\{ \sum_{i-1}^N -s_i (\lambda_i F)^*(-\phi_i)\, \Bigm\vert \, \sum_{i=1}^N \phi_i \leq \tilde c(\vec x) \right\},
\end{align*}
where now $\phi_i$ and $\psi_i$ are scalars. 

We can thus introduce a homogeneous formulation, given by
\begin{equation}\label{eqn-def-hom-form-XN}
    \H(\vec \mu \mid \gamma) := \int_{X^N}H_{\rm SMM}(\vec x, \rho_1(x_1),\dots,\rho_N(x_N)) d\gamma + \sum_{i=1}^N \lambda_i (\mu_i - \rho_i \gamma_i)(X)
\end{equation}
and, using the strategy from \cite[Theorem~5.5]{LMS18}, it follows from the dual representation for $H_{\rm SMM}$ that  
\begin{equation}\label{eqn-sandwich}
    \underbrace{\sup_{\vec \phi \in \Phi}\left\{ \sum_{i=1}^N (-(\lambda_i F)^*(-\phi_i),\mu_i)\right\}}_{={\rm HK}_{\rm SMM}(\vec \mu)}\leq \inf_{\gamma \in \M(X^N)} \H(\vec \mu \mid \gamma) \leq \underbrace{\inf_{\gamma \in \M(X^N)}\Rc(\vec \mu \mid \gamma)}_{={\rm HK}_{\rm SMM}(\vec \mu)},
\end{equation}
which of course imply that the middle term is equal to ${\rm HK}_{\rm SMM}(\vec \mu)$ too. 

The final step is to lift to the cone. This is done by defining 
\[
    \inf_{\alpha \in S(\vec \mu)} (H_{\rm SMM}, \alpha),
\]
where we recall from \eqref{eqn-def-S-vec-mu} and \eqref{eqn-def-h-marg} that
\[
    S(\vec \mu) := \{ \alpha \in \M(Y^N)\,\mid\, \h_i \alpha = \mu_i\}, \quad \h_i\alpha = \pi^{x_i}_{\#}(s_i \alpha).
\]
This is the cone-equivalent of \eqref{eqn-def-hom-form-XN}, especially since, as in \eqref{R-inf-zero}, the singular part is zero due to the constraint on the homogeneous marginal in the set $S(\vec \mu)$. 

Obvious adjustments to the argument in \cite[Theorem 5.8]{LMS18} allow us to conclude that in fact the sandwich inequality in \eqref{eqn-sandwich} holds for the cone formulation too. Since we have already shown that $H_{\rm SMM} = \overline{H}_{\rm CMM}$, we thus obtain that 
\[
    {\rm HK}_{\rm SMM}(\vec \mu) = \inf_{\alpha \in S(\vec \mu)} (\overline{H}_{\rm CMM}, \alpha) = \overline{\rm HK}_{\rm CMM}(\vec \mu),
\]
which is what we set out to prove. 

The fact that
\[
    \bar \alpha = (x_1,\bar \rho_1(x_1),\dots,x_N, \rho_N(x_N))_{\#}\bar \gamma, 
\]
where 
\[
    \mu_i = \bar \rho_i \bar \gamma_i + \mu_i^{\perp},
\]
is a minimiser of $\overline{\rm HK}_{\rm CMM}(\vec \mu)$ is an immediate extension of \cite[Theorem~5.8]{LMS18}.
\end{proof}
\subsection{Proof of Proposition~\ref{prop-CC2M-sol}}\label{sec:proof-CC2M-sol}
\begin{proof}
The argument is similar to the proof {\cb in} \cite[Proposition~4.2]{FMS21}. Define
\[
    J\,\colon \M(X) \to \overline \R
\]
given by 
\[
    J(\nu) := \sum_{i=1}^N \lambda_i \overline{\rm HK}^2(\mu_i,\nu).
\]
The functional $J$ is feasible, that is $\inf_{\nu \in \M(X)} J(\nu) < \infty$. This follows naturally from the fact that the zero measure is a competitor and 
\[
    J(0) = \sum_{i=1}^N \lambda_i \overline{\rm HK}^2(\mu_i,0) = \sum_{i=1}^N \lambda_i \mu_i(X) < \infty.
\]
The functional $J$ is also bounded below. This follows from 
\[
    J(v) = \sum_{i=1}^N \lambda_i \overline{\rm HK}^2(\mu_i,\nu) \geq \sum_{i=1}^N \lambda_i {\rm HK}^2(\mu_i,\nu) \geq 0,
\]
where the last inequality holds since ${\rm HK}$ is a distance. The first inequality follows from the fact that the minimizer of $\overline{\rm HK}(\nu_i,\mu)$ is a competitor in ${\rm HK}(\nu_i,\mu)$ and further that $F(1) = 0$.

Thus a minimising sequence $(\nu_k)_{k \in \mathbb N}$ exists and from the lower bound
\[
    \overline{\rm HK}^2(\mu, \nu) \geq {\rm HK}^2(\mu,\nu) \geq (\sqrt{\mu(X)} - \sqrt{\nu(X)})^2,
\]
we can conclude that $\nu_k(X)$ is uniformly bounded, and hence the sequence has a weak-* cluster point $\bar \nu$, meaning that, up to a subsequence, for all continuous and bounded $f$,
\[
    \int_{X} f(x) d\nu_k(x) \to \int_X f(x) d\bar \nu(x).
\]

At the same time, due to \cite[Theorem~3.3]{LMS18}, for each $\nu_k$, there exists a collection $\{\gamma^{i,k}\}_{i=1}^N \subset \M(X \times X)$ of minimizers of $\overline{\rm HK}^2(\mu_i,\nu_k)$. Using the notation 
\[
    \E(\gamma \mid \nu, \mu) := (c,\gamma) + \F(\gamma_1 \mid \mu) + \overline{\F}(\gamma_2 \mid \nu),
\]
we thus have 
\[
    J(\nu_k) = \sum_{i=1}^N \lambda_i \E(\gamma^{i,k} \mid \mu_i, \nu_k).
\]
In particular, the hard-constraint entropy functional $\overline \F(\gamma^{i,k}_2 \mid \nu_k)$ ensures that
\[
    \gamma^{i,k}(X \times X) = \gamma^{i,k}_2(X) = \nu_k(X),
\] 
implying that the masses of $\{\gamma^{i,k}\}_{k \in \mathbb N}$ are uniformly bounded, thus ensuring that the sequence admits a weak-* cluster point $\bar \gamma^i$, meaning that, up to a subsequence, for all continuous and bounded $f$,
\[
    \int_{X \times X} f(x_1,x_2)d\gamma^{i,k}(x_1,x_2) \to \int_{X \times X}f(x_1,x_2)d\bar \gamma^i(x_1,x_2).
\]
The final step is to realise that, up to choosing a subsequence, we have
\begin{align*}
    \inf_{\nu \in \M(X)}J(\nu) &= \lim_{k\to\infty}J(\nu_k) = \lim_{k\to \infty} \sum_{i=1}^N \lambda_i \E(\gamma^{i,k}\mid \mu_i, \nu_k)\\ &\geq \sum_{i=1}^N \lambda_i \E(\bar \gamma^i\mid \mu_i, \bar \nu) \geq \sum_{i=1}^N \lambda_i \overline{\rm HK}^2(\mu_i,\bar\nu) \geq \inf_{\nu \in \M(X)}J(\nu). 
\end{align*}
The first inequality follows from the fact that the cost function $(c,\gamma^{i,k})$ and the entropy functional $\F(\gamma^{i,k}_1 \mid \mu_i)$ are both lower semi-continuous, whereas $\overline\F(\gamma^{i,k}_2 \mid \nu_k)$ is (trivially) jointly lower semi-continuous in both variables, (c.f. \cite[Corollary 2.9]{LMS18}). The second inequality follows merely from the fact that $\bar \gamma^i$ is a competitor. We have thus established that $\bar \nu$ is a minimiser. 
\end{proof}
\subsection{Proof of Theorem~\ref{thm-2}}\label{sec-proof-thm-2}
We begin by proving an auxiliary result concerning the characterisation of the cost function
\[
    \tilde c(\vec x) = \inf_{x \in X} \sum_{i=1}^N \lambda_i c(x_i,x),
\]
defined in \eqref{eqn-def-c-tilde}.
\begin{lemma}\label{lem-c-tilde}
    The function $\tilde c$ attains a finite value only on the set
    \[
       A := \left\{ \vec x \in X^N \mid \bigcap_{i=1}^N B_{\frac{\pi}{2}(x_i)} \neq \emptyset\right\}  
    \]
    and is identically $+\infty$ otherwise. Furthermore, if $\vec x \in A$, then there exists a unique minimizer of 
    \[
        \inf_{x \in X} \sum_{i=1}^N \lambda_i c(x_i,x)
    \]
    and consequently there exists a map $T\,\colon A \to X$ mapping $\vec x$ to the minimizer.
\end{lemma}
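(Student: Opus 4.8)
The plan is to treat the two assertions separately: first pin down exactly where $\tilde c$ is finite, and then analyse the minimisation problem on that set, extracting existence from compactness/lower semicontinuity and uniqueness from strict convexity.

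First I would establish the finiteness characterisation. The cost $c(x_i,x)$ defined in \eqref{HK-cost-fct} is finite precisely when $\d(x_i,x) < \pi/2$, i.e.\ when $x \in B_{\pi/2}(x_i)$, and equals $+\infty$ otherwise, while always $c \geq 0$ since $\cos \leq 1$. Hence for a fixed $x$ the sum $\sum_{i=1}^N \lambda_i c(x_i,x)$ is finite if and only if $x \in \bigcap_{i=1}^N B_{\pi/2}(x_i)$ (assuming $\lambda_i > 0$ for every $i$, which we may take after discarding zero-weight inputs so that $A$ as written is the exact finiteness set). Consequently, if $\vec x \in A$ I choose any $x$ in the nonempty intersection to obtain a finite value, so the infimum is finite and nonnegative; and if $\vec x \notin A$ every summand is $+\infty$, forcing $\tilde c(\vec x) = +\infty$. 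This gives the first claim.

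For existence, fix $\vec x \in A$ and set $g(x) := \sum_{i=1}^N \lambda_i c(x_i,x)$. Each map $x \mapsto c(x_i,x)$ is lower semicontinuous on $\R^n$ — continuous and finite on the open ball $B_{\pi/2}(x_i)$, equal to $+\infty$ outside it, and tending to $+\infty$ as $x$ approaches the boundary sphere — so $g$ is lower semicontinuous and bounded below by $0$ on the compact set $X$ and therefore attains its infimum. Since $\vec x \in A$ the infimal value is finite, so any minimiser lies in $U := \bigcap_{i=1}^N B_{\pi/2}(x_i)$. The crux is uniqueness, which I would deduce from strict convexity of $g$ on the convex set $U$. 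Writing $c(x_i,x) = -2\log\cos(\d(x_i,x)) = 2\,\Theta(|x_i-x|^2)$ with $\Theta(t) := -\log\cos(\sqrt t)$ on $[0,(\pi/2)^2)$, a one-line computation shows $\Theta$ is strictly increasing and convex: the sign of the relevant derivative reduces to the elementary inequality $2r \geq \sin 2r$ for $r \in [0,\pi/2)$. As $t \mapsto |x_i-x|^2$ is smooth and strictly convex, the composition $x \mapsto c(x_i,x)$ is strictly convex on $B_{\pi/2}(x_i)$ (a strictly increasing convex function of a strictly convex function is strictly convex); equivalently, the Hessian of $x \mapsto -\log\cos(\d(x_i,x))$, namely $\sec^2(r_i)\,u_iu_i^\top + \tfrac{\tan r_i}{r_i}\,(I - u_iu_i^\top)$ with $r_i = |x_i-x|$ and $u_i = (x-x_i)/r_i$, is positive definite. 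Summing with the positive weights $\lambda_i$ makes $g$ strictly convex on $U$, so it has at most one minimiser there, and combined with the existence step this yields the unique $T(\vec x)$.

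The main obstacle is this strict-convexity argument together with the requirement that the minimiser actually lie in $X$. The latter I would settle via the first-order condition $\nabla g = 0$, which exhibits the minimiser as $\bar x = \big(\sum_i w_i\big)^{-1}\sum_i w_i\,x_i$ with weights $w_i = \lambda_i \tan(r_i)/r_i \geq 0$, hence as a convex combination of $x_1,\dots,x_N$; provided $X$ is convex (the standing setting, e.g.\ $X = [0,1]$ in the numerical example) this point lies in $\mathrm{conv}\{x_i\} \subseteq X$ and is therefore the unique minimiser of the constrained problem $\inf_{x \in X}$ as well, so that $T\colon A \to X$ is well defined.
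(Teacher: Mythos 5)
Your proof is correct and follows essentially the same route as the paper's: finiteness of $\tilde c$ exactly on $A$ is read off from the definition of $c$, and uniqueness comes from strict convexity of $x\mapsto -\log\cos^2(|x_i-x|)$ summed with positive weights over the convex intersection of the $\pi/2$-balls. You additionally supply details the paper leaves implicit --- the actual verification of strict convexity (via $2r\ge \sin 2r$), an explicit existence argument from lower semicontinuity and compactness of $X$ (strict convexity alone yields only at-most-one minimiser), and the observation that the first-order condition places the minimiser in $\mathrm{conv}\{x_1,\dots,x_N\}\subseteq X$, which makes explicit the convexity of $X$ that the paper tacitly uses when declaring the intersection of balls convex.
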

\begin{proof}
    The fact that $\tilde c(\vec x) < \infty$ if and only if $\vec x \in A$ follows directly from the definition of $c$ and $\tilde c$. We note that $A$ is convex. Now let us fix $\vec x \in A$ and define $f\,\colon\,B \to \R$, where 
    \[
        B:= \bigcap_{i=1}^N B_{\frac{\pi}{2}}(x_i), \quad f(x) := \sum_{i=1}^N \lambda_i c(x_i,x).
    \] 
    By design, $f$ is finite on its domain and $B$ is a non-empty convex set, since it is a finite intersection of balls. We can thus rewrite $f$ as
\begin{align*}
    f(x) &= -\sum_{i=1}^N \lambda_i \log(\cos^2(|x_i-x|)) = -\log\left(\prod_{i=1}^N \cos^{2\lambda_i}(|x_i-x|)\right)
\end{align*}
It can be readily shown that 
\[
    g\,\colon B_{\frac{\pi}{2}}(0) \to \R, \quad g(x) = -\log(\cos^2(|x|))
\]
is strictly convex, and thus so is $f$, since it is a sum of convex functions and $B$ is convex. As a result, it has a unique minimiser, thus establishing existence of the mapping $T$.  
\end{proof}

We can now proceed to proving the main theorem of this section. 
\begin{proof}[Proof of Theorem~\ref{thm-2}]
    As in the corresponding proof in the Wasserstein metric \cite{AC11}, we will proceed in two steps, but first, in light of Lemma~\ref{lem-c-tilde}, we discuss the special case where 
    \[
        \bigcap_{i=1}^N B_{\frac{\pi}{2}}({\rm supp}\mu_i) = \emptyset,
     \]
    where we use the notation 
    \[
        B_{\frac{\pi}{2}}(A):= \left\{x \in X \mid \d(x,A) < \frac{\pi}{2}\right\}, \quad \d(x,A):= \inf_{y \in A} |x-y|.
    \]
    This is the case where no transport of mass can occur. This is, on the one hand, because the minimizer of ${\rm HK}_{\rm CC2M}(\vec \mu)$ is the zero measure, that is $\bar \nu = 0$, which is the consequence of the hard marginal constraint $\bar \gamma_{i,2} = \bar \nu$. As a result only the zero measure ensures that $\overline{\rm HK}^2(\mu_i,\nu)$ is finite for all $i$. Consequently, each two-marginal problem is minimized by the zero measure too, that is $\bar \gamma_i = 0$. 
    
    On the other hand, in the soft marginal formulation, the minimiser of ${\rm HK}_{\rm SMM}(\vec \mu)$ is given by $\bar \gamma = 0$, which follows from the fact that any feasible plan $\gamma \in \M(X^N)$ has to satisfy $\gamma_i \ll \mu_i$, from which it follows that $(\tilde c, \gamma) = +\infty$ unless $\gamma = 0$. In this case the mapping $T$ does not exist, but it does not need to be discussed since the pushforward of the zero measure is always the zero measure.

    In what follows we thus focus on the other case, namely that
    \begin{equation}\label{eqn-supp-mu_i-constraint}
        \bigcap_{i=1}^N B_{\frac{\pi}{2}}({\rm supp}\mu_i) \neq \emptyset.
    \end{equation}

\textbf{Step 1: showing that ${\rm HK}_{\rm CC2M}(\vec \mu) \geq {\rm HK}_{\rm SMM}(\vec \mu)$.} Let $\bar \nu \in \M(X)$ be a minimizer of ${\rm HK}_{\rm CC2M}(\vec \mu)$ (proven to exist in Proposition~\ref{prop-CC2M-sol}) and $\bar \gamma_i \in \M(X \times X)$ be a minimizer of $\overline{\rm HK}^2(\mu_i,\bar \nu)$. 
By $(\bar \gamma_i^{x_2})_{x_2 \in X}$ we denote the disintegration of $\bar \gamma _i$ with respect to its second marginal, which, by construction, for each $i$ is given by $\bar \gamma_{i,2} = \bar \nu$.

This allows us to introduce $\hat \gamma \in \M(X^N)$ via
\[
    \int_{X^N}\phi(\vec x) d \hat \gamma := \int_{X^{N+1}} \phi(\vec x)d\bar\gamma^y_1(x_1),\dots,d\bar\gamma^y_{N}(x_N)d\bar \nu(y).
\]
We note that the $i$th marginal of $\hat \gamma_i$ is given by $\hat \gamma_i = \bar \gamma_{i,1}$, which follows from observing that 
\begin{align*}
    \int_X \phi(x)d\hat \gamma_i &= \int_{X^N}\phi(\pi^i(\vec x))d\hat \gamma = \int_{X \times X} \phi(x_i)d\hat \gamma_i^{y}(x_i)d\bar \nu = \int_{X\times X} \phi(x_1) d\bar \gamma_i\\ 
    &= \int_X \phi(x) d\bar \gamma_{i,1}.
\end{align*}
This in particular naturally implies that in the Lebesgue decomposition between $\hat \gamma_i$ and $\mu_i$, 
\[
    \mu_i = \hat\rho_{i} \hat \gamma_i + \mu_i^{\perp},
\]
we have $\hat \rho_i = \bar \rho_{i,1}$. As a result
\begin{align*}
    &{\rm HK}_{\rm SMM}(\vec \mu) \leq (\tilde c,\hat \gamma) + \sum_{i=1}^N \lambda_i \F(\hat \gamma_i \mid \mu_i)\\
    &= \int_{X^N}\left(\tilde c(\vec x) + \sum_{i=1}^N \lambda_i R(\hat \rho_i(x_i))\right) d\hat \gamma + \sum_{i=1}^N \lambda_i (\mu_i -  \hat\rho_i \hat\gamma_i)(X)\\
    &= \int_{X^{N+1}}\left( \inf_{x \in X} \sum_{i=1}^N \lambda_i c(x_i,x) + \sum_{i=1}^N \lambda_i R(\bar \rho_{i,1}(x_i)) \right)d\hat \gamma  + \sum_{i=1}^N \lambda_i (\mu_i -  \bar\rho_{i,1} \bar\gamma_{i,1})(X)\\
    &\leq \int_{X^{N+1}}\left( \sum_{i=1}^N \lambda_i c(x_i,y) + \sum_{i=1}^N \lambda_i R(\bar \rho_{i,1}(x_i)) \right)d\hat \gamma  + \sum_{i=1}^N \lambda_i (\mu_i -  \bar\rho_{i,1} \bar\gamma_{i,1})(X)\\
     &{\cb = \sum_{i=1}^N\left( \lambda_i \int_{X\times X}\left(c(x_i,y) + R(\bar \rho_{i,1}(x_i))\right)d\bar\gamma_i^y(x_i)d\bar \nu(y)\right) + \sum_{i=1}^N \lambda_i (\mu_i -  \bar\rho_{i,1} \bar\gamma_{i,1})(X)} \\
    &{\cb = \sum_{i=1}^N \left(\lambda_i \int_{X\times X}c(x_i,y) + R(\bar \rho_{i,1}(x_i))d\bar\gamma_i + \lambda_i (\mu_i -  \bar\rho_{i,1} \bar\gamma_{i,1})(X)\right)}\\
    &= \sum_{i=1}^N \lambda_i (c,\bar \gamma_i) + \lambda_i \F(\bar \gamma_i \mid \mu_i)\\
    &= \sum_{i=1}^N \lambda_i \overline{\rm HK}^2(\mu_i,\bar\nu) = {\rm HK}_{\rm CC2M}(\vec \mu).
\end{align*}

\textbf{Step 2: showing that ${\rm HK}_{\rm CC2M}(\vec \mu) \leq {\rm HK}_{\rm SMM}(\vec \mu)$.} Let $\bar \gamma \in \M(X^N)$ be the minimiser of ${\rm HK}_{\rm SMM}(\vec \mu)$ and $T\,\colon\,A \to X$ be the mapping to the infimum in the cost function $\tilde c$, which exists due to the assumption on the support of measures $\mu_1,\dots, \mu_N$ in \eqref{eqn-supp-mu_i-constraint}, as detailed in Lemma~\ref{lem-c-tilde}. We thus have 
\[
    \tilde c (\vec x) = \inf_{x \in X}\sum_{i=1}^N \lambda_i c(x_i,x) = \sum_{i=1}^N \lambda_i c(x_i,T(\vec x)).
\]
We can then set $\hat \gamma_i := (\pi^i, T)_{\#}\bar \gamma$ and $\hat \nu := T_{\#}\bar \gamma$. It readily follows that 
\[
    \hat \gamma_{i,1} = \bar \gamma_i, \quad \hat \gamma_{i,2} = \hat \nu,
\]
which allows us to argue that
\begin{align*}
    {\rm HK}_{\rm SMM}(\vec \mu) &= \sum_{i=1}^N \lambda_i \left( \int_{X^N} c(x_i, T(\vec x)) d \bar \gamma + \F(\bar \gamma_i \mid \mu_i) \right)\\
    &= \sum_{i=1}^N \lambda_i \left( (c, \hat \gamma_i) + \F(\hat \gamma_{i,1} \mid \mu_i ) + \overline \F(\hat \gamma_{i,2} \mid \hat \nu)\right)\\
    &\geq {\rm HK}_{\rm CC2M}(\vec \mu),
\end{align*}
where the second equality follows from the fact that by construction $\hat \gamma_{i,2} = \hat \nu$ and thus $\overline F(\hat \gamma_{i,2} \mid \hat \nu) = 0$. The final inequality follows from the fact that $\hat \gamma_{i}$ and $\hat \nu$ are admissible competitors.
\end{proof}

\bibliographystyle{siamplain}
\bibliography{papers}

\end{document}